\newtheorem{theorem}{Theorem}
\newtheorem{corollary}[theorem]{Corollary}
\newtheorem{lemma}[theorem]{Lemma}
\newtheorem{proposition}[theorem]{Proposition}
\theoremstyle{definition}
\theoremstyle{remark}
\newtheorem{remark}[theorem]{Remark}
\theoremstyle{remark}
\newtheorem{example}[theorem]{Example}
\def\im{\operatorname{Im}}
\def\aut{\operatorname{Aut}}
\def\prop{\operatorname{Prop}}
\def\rea{\operatorname{Re}}
\title[Proper mappings]{Geometry of quasi-circular domains and applications to tetrablock}
\author[\L.~Kosi\'nski]{\L ukasz Kosi\'nski}
\address{Instytut Matematyki\\ Uniwersytet Jagiello\'nski\\ \L ojasiewicza 6,
30-348 Krak\'ow\\ Poland} \email{lukasz.kosinski@gazeta.pl} 
\keywords{Tetrablock, proper holomorphic mappings, group of automorphisms, quasi-circular domains, Shilov boundary.}
\subjclass[2000]{32H35, 32M12}
\thanks{The paper was partially supported by the Research Grant of the Polish Ministry of Science and Higher Education N$^{\text o}$ \textbf{N N201 271435}}
\begin{document}
\maketitle

\begin{abstract}We prove that the Shilov boundary is invariant under proper holomorphic mappings between some classes of domains (containing among others quasi-balanced domains with the continuous Minkowski functionals). Moreover, we obtain an extension theorem for proper holomorphic mappings between quasi-circular domains. 

Using these results we show that there are no non-trivial proper holomorphic self-mappings in the tetrablock. Another important result of our work is a description of Shilov boundaries of a large class of domains (containing among other the symmetrized polydisc and the tetrablock). 

It is also shown that the tetrablock is not $\mathbb C$-convex.
\end{abstract}

\section{Introduction and statement of results}
In the paper we will use the notion of \emph{quasi-circular} domains. Let $m_1,\ldots,m_n$ be relatively prime natural numbers. Recall that a domain $D\subset\mathbb C^n$ is said to be $(m_1,\ldots,m_n)$-circular (shortly \textit{quasi-circular}) if \begin{equation}\label{defcir} (\lambda^{m_1}x_1,\ldots,\lambda^{m_n}x_n) \in D\quad \text{for any}\quad |\lambda|=1,\ x=(x_1,\ldots,x_n)\in D.\end{equation} If the relation (\ref{defcir}) holds with $|\lambda|\leq 1,$ then $D$ is said to be $(m_1,\ldots,m_n)$-balanced (shortly \textit{quasi-balanced}).

Let $\mathcal R_{II}$ denote the classical Cartan domain of the second type, i.e. $$\mathcal R_{II}=\{z\in\mathcal{M}_{2\times2}(\mathbb C):\ z=z^t,\ ||z||<1\},$$ where $||\cdot||$ is the operator norm and $\mathcal M_{2\times 2}(\mathbb C)$ denotes the space of $2\times 2$ complex matrices. Put $$\Pi:\mathcal{M}_{2\times2}(\mathbb C)\ni z=(z_{i,j})\rightarrow (z_{1,1},z_{2,2},\det z)\in\mathbb C^3.$$ We define $\mathbb E:=\Pi(\mathcal R_{II}).$ The domain $\mathbb E$ is called the \emph{tetrablock}.

The tetrablock is a $(1,1,2)$-balanced domain in $\mathbb C^3$ appearing in control engineering and produces problems of a function-theoretic character. Its geometric properties have been investigated in several papers (see e.g. \cite{You0}, \cite{Zwo}, \cite{You}  and references contained there). Recall here that in \cite{You} the author using Kaup's theorem obtained a description of the group of automorphisms of this domain. In the paper we prove an Alexander-type theorem for the tetrablock showing that every proper holomorphic self-map of the tetrablock is an automorphism.

\begin{theorem}\label{main}
 Let $\varphi:\mathbb E\to\mathbb E$ be a proper holomorphic mapping. Then $\varphi$ is an automorphism.
\end{theorem}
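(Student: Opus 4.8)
The plan is to exploit the presentation of the tetrablock as a quotient of the Cartan domain $\mathcal R_{II}$. Since $\Pi(z)=(z_{1,1},z_{2,2},\det z)$ and $\det z=z_{1,1}z_{2,2}-z_{1,2}^2$, the map $\Pi$ is a proper, degree-two branched covering whose deck transformation is the involution $\sigma$ reversing the sign of the off-diagonal entry; thus $\mathbb E=\mathcal R_{II}/\langle\sigma\rangle$, and the branch locus is the image of $\{z_{1,2}=0\}$, namely $V:=\{x\in\mathbb E:\ x_1x_2=x_3\}$, since $\Pi^*(x_1x_2-x_3)=z_{1,2}^2$. My strategy is to lift $\varphi$ to a proper holomorphic self-map $\tilde\varphi$ of $\mathcal R_{II}$ with $\Pi\circ\tilde\varphi=\varphi\circ\Pi$, and then to invoke the rigidity of irreducible bounded symmetric domains: $\mathcal R_{II}$ is the bounded realization of the genus-two Siegel upper half-space, an irreducible domain of rank two, so every proper holomorphic self-map of it is an automorphism. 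Granting the lift, $\tilde\varphi$ is proper (because $\Pi$ and $\varphi$ are), hence an automorphism; one checks that it then normalizes $\langle\sigma\rangle$ and descends to an automorphism of $\mathbb E$ which, by $\Pi\circ\tilde\varphi=\varphi\circ\Pi$ and surjectivity of $\Pi$, coincides with $\varphi$. This is the assertion, and it is consistent with the description of $\aut(\mathbb E)$ recalled above.

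To build the lift I would first apply the extension theorem for proper holomorphic mappings between quasi-circular domains to $\varphi$, which extends $\varphi$ beyond $\overline{\mathbb E}$ and in particular controls it along the boundary. Combined with the invariance of the Shilov boundary under proper mappings and the explicit description $b\mathbb E=\{(\bar\omega\zeta,\omega,\zeta):\ |\omega|\le1,\ |\zeta|=1\}$, this yields $\varphi(b\mathbb E)=b\mathbb E$ and pins down the behaviour of $\varphi$ transverse to $V$. The lift exists as a single-valued map precisely when the pulled-back function $g:=(\varphi_1\varphi_2-\varphi_3)\circ\Pi$ admits a holomorphic square root on $\mathcal R_{II}$; for then one sets $\tilde\varphi_{1,1}=\varphi_1\circ\Pi$, $\tilde\varphi_{2,2}=\varphi_2\circ\Pi$ and $\tilde\varphi_{1,2}=\tilde\varphi_{2,1}=\sqrt{g}$, so that $\det\tilde\varphi=(\varphi_1\varphi_2)\circ\Pi-g=\varphi_3\circ\Pi$ and hence $\Pi\circ\tilde\varphi=\varphi\circ\Pi$ by construction. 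Because $\mathcal R_{II}$ is simply connected, such a square root exists if and only if the divisor of $g$ has even multiplicity along every component; as the ramification of $\Pi$ along $V$ is already absorbed into $\Pi^*(x_1x_2-x_3)=z_{1,2}^2$, this reduces to a parity statement for the order of $\varphi$ along $V$, which the boundary information above is designed to supply.

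It then remains to verify that $\tilde\varphi$ actually takes values in $\mathcal R_{II}$ rather than merely in symmetric matrices. This is automatic: $\tilde\varphi$ is symmetric by construction, and $\Pi\circ\tilde\varphi=\varphi\circ\Pi$ forces $\tilde\varphi(z)$ into the symmetric fibre of $\Pi$ over $\varphi(\Pi(z))\in\mathbb E$; since $\sigma$ preserves $\mathcal R_{II}$, both symmetric preimages of a point of $\mathbb E$ already lie in $\mathcal R_{II}$, so $\tilde\varphi(\mathcal R_{II})\subseteq\mathcal R_{II}$. Properness of $\tilde\varphi$ is inherited from properness of $\Pi\circ\tilde\varphi=\varphi\circ\Pi$ together with finiteness of $\Pi$. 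The descent at the end uses that an automorphism $\tilde\varphi$ satisfying $\Pi\circ\tilde\varphi=\varphi\circ\Pi$ must send $\sigma$-orbits to $\sigma$-orbits, whence $\tilde\varphi\circ\sigma=\sigma\circ\tilde\varphi$ and $\tilde\varphi$ descends.

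I expect the principal obstacle to be exactly the existence of the lift, that is, the verification that $g$ admits a single-valued holomorphic square root. This is a monodromy/parity condition on the ramification of $\varphi$ over the branch locus $V$, and it is here that the extension theorem and the precise description of the Shilov boundary are indispensable, since they are what control $\varphi$ near $V$ and along $b\mathbb E$. By comparison, the concluding appeal to the rigidity of the rank-two domain $\mathcal R_{II}$ and the descent of $\tilde\varphi$ to $\mathbb E$ are comparatively formal.
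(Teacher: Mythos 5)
Your overall architecture---lift $\varphi$ through the degree-two branched covering $\Pi:\mathcal R_{II}\to\mathbb E$ and conclude by rigidity upstairs---is the same as the paper's, but your proposal has a genuine gap at precisely the point you yourself flag as the ``principal obstacle'': you never prove that $g=(\varphi_1\varphi_2-\varphi_3)\circ\Pi$ admits a single-valued holomorphic square root, i.e.\ that $f_1f_2-f_3$ (with $f=\varphi\circ\Pi$) vanishes to even order along every irreducible component of $f^{-1}(V)$. Asserting that the extension theorem and the Shilov-boundary invariance ``are designed to supply'' this parity is not an argument: the obstruction is a $\mathbb Z/2$-monodromy condition carried by small loops in the interior around the components of $\varphi^{-1}(V)$, and neither the boundary behaviour of $\varphi$ on $\partial_s\mathbb E$ nor holomorphic extendability past $\overline{\mathbb E}$ obviously controls the local vanishing order of $\varphi_1\varphi_2-\varphi_3$ along an interior hypersurface. (Note also that $\sigma$-invariance of $g$ does not force even vanishing, as $z\mapsto z^2-a$ on $\mathbb C$ with $\sigma(z)=-z$ shows.) Since the existence of the global lift is the entire content of the theorem---once it exists, the rest is soft---the proof is incomplete.

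The paper's proof sidesteps the monodromy question entirely, and this is the idea you are missing. It picks a single point $x_0\in\partial_s\mathcal R_{II}$ at which the extended map $f=\varphi\circ\Pi$ is nondegenerate and $f_1(x_0)f_2(x_0)\neq f_3(x_0)$; near such a point $\Pi$ is an unbranched double cover, so a \emph{local} lift $\psi$ with $f=\Pi\circ\psi$ on a neighbourhood of $x_0$ exists trivially. Shilov-boundary invariance (Corollary~\ref{cor}) forces $\psi(x_0)$ to be a unitary symmetric matrix, and Lemma~\ref{bih}---the local boundary-rigidity statement extending Tumanov--Henkin---then upgrades the local biholomorphism $\psi$ to a global automorphism of $\mathcal R_{II}$; the identity principle propagates $f=\Pi\circ\psi$ to all of $\mathcal R_{II}$, so the global lift appears with no parity condition ever being checked. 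A further difference: with the lift in hand the paper concludes by the elementary multiplicity count $2\cdot\operatorname{mult}(\varphi)=\operatorname{mult}(\Pi\circ\psi)=2$, so it never needs the Henkin--Novikov rigidity theorem for rank-two symmetric domains that you invoke (that theorem is true, but avoidable here). To salvage your route you would have to either prove the parity statement directly or replace it by a local-lift-plus-boundary-rigidity argument of this kind.
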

As a side effect we obtain a natural correspondence between automorphisms of the tetrablock and of the classical domain of the second type indicated in Lemma~\ref{lemma2}. This correspondence gives much easier and more elementary method of deriving the explicit formulas for automorphisms of the tetrablock. In particular, we extend results from \cite{You} and simplify their proofs.

The methods used in the paper rely upon the investigation of proper holomorphic mappings between quasi-circular domains. We start with generalizing the Bell's extension result (see \cite{Bel}). Next we analyze the behavior of the Shilov boundary under proper holomorphic mappings. We have the following
\begin{theorem}\label{shpr}Let $D$ and $G$ be bounded domains in $\mathbb C^n$ and let $f:D\to G$ be a proper holomorphic mapping extending continously to $\overline D.$ Assume that there is an increasing family of domains $\{G_m\},$ $G_m\Subset G_{m+1},$ such that $\bigcup G_m=G$ and $\overline{(\bigcup_m \partial_s G_m)}\cap\partial G=\partial_s G.$

Then $f(\partial_s D)=\partial_s G.$
\end{theorem}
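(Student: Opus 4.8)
The plan is to prove the two inclusions $\partial_s G\subseteq f(\partial_s D)$ and $f(\partial_s D)\subseteq\partial_s G$ separately. Throughout, for a bounded domain $\Omega$ I write $A(\Omega)$ for the algebra of functions holomorphic on $\Omega$ and continuous on $\overline\Omega$, so that $\partial_s\Omega$ is the smallest closed subset of $\overline\Omega$ on which every member of $A(\Omega)$ attains its maximum modulus. Since $f$ is a proper holomorphic map between equidimensional bounded domains, it is a surjective branched covering of some finite degree $k$, and its continuous extension satisfies $f(\overline D)=\overline G$ and $f(\partial D)\subseteq\partial G$; as $\partial_s D\subseteq\partial D$ is compact, $f(\partial_s D)$ is a closed subset of $\partial G$.

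The inclusion $\partial_s G\subseteq f(\partial_s D)$ is the easy one and does not use the exhaustion $\{G_m\}$. For every $g\in A(G)$ the composition $g\circ f$ lies in $A(D)$, and surjectivity of $f$ on the closures gives $\|g\circ f\|_{\overline D}=\|g\|_{\overline G}$. Because $\partial_s D$ is a boundary for $A(D)$, I obtain $\sup_{f(\partial_s D)}|g|=\|g\circ f\|_{\partial_s D}=\|g\|_{\overline G}$ for every $g\in A(G)$. Thus the closed set $f(\partial_s D)$ is a boundary for $A(G)$, and minimality of the Shilov boundary forces $\partial_s G\subseteq f(\partial_s D)$.

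For the reverse inclusion I argue by contradiction: suppose $z_0\in\partial_s D$ but $w_0:=f(z_0)\notin\partial_s G$. Since $w_0\in\partial G$ and, by hypothesis, $\partial_s G=\overline{\bigcup_m\partial_s G_m}\cap\partial G$, the point $w_0$ does not belong to the closed set $\overline{\bigcup_m\partial_s G_m}$; hence there is an open neighbourhood $U$ of $w_0$ with $\overline U\cap\partial_s G_m=\emptyset$ for every $m$. Put $V:=f^{-1}(U)$, a relatively open neighbourhood of $z_0$ in $\overline D$. As $z_0\in\partial_s D$, the closed set $\overline D\setminus V$ cannot be a boundary for $A(D)$ (otherwise it would be a boundary omitting $z_0$), so there is $h\in A(D)$ with $\|h\|_{\overline D}=1$ and $c:=\|h\|_{\overline D\setminus V}<1$. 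I now replace $h$ by $h^N$ and push it forward: the elementary symmetric functions $\sigma_1,\dots,\sigma_k$ of the fibre values $\{h^N(z):z\in f^{-1}(w)\}$ are bounded and holomorphic off the thin branch locus, hence extend to $\sigma_j\in\mathcal O(G)$ by Riemann's removable singularity theorem. For each $m$ the function $\sigma_j$ attains its maximum over $\overline{G_m}$ on $\partial_s G_m\subseteq\overline{G_m}\setminus\overline U$; at such a maximiser $w$ every fibre point lies outside $V$, so $|h^N|\le c^N$ there, giving $|\sigma_j(w)|\le\binom{k}{j}c^{Nj}$. Letting $m\to\infty$ and using $G=\bigcup_m G_m$ yields $\sup_G|\sigma_j|\le\binom{k}{j}c^{Nj}$.

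It remains to contradict $\|h\|_{\overline D}=1$. This value is attained at some $z_*\in\partial D$ (the maximum principle excludes interior points, since $h$ is non-constant); approximating $z_*$ by points $z^{(l)}\in D$ and setting $w_l:=f(z^{(l)})\in G$, the number $h^N(z^{(l)})$ is a root of $t^k-\sigma_1(w_l)t^{k-1}+\dots+(-1)^k\sigma_k(w_l)$ whose modulus tends to $1$. The coefficient bound then forces $1\le\sum_{j=1}^k\binom{k}{j}c^{Nj}=(1+c^N)^k-1$, which is false once $N$ is large enough that $(1+c^N)^k<2$; this contradiction proves $f(\partial_s D)\subseteq\partial_s G$. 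I expect the main obstacle to be precisely this final estimate: a single push-forward of $h$ is useless because $c$ may be arbitrarily close to $1$, and the device that rescues the argument is passing to the high power $h^N$, which drives the exterior bound $c^N$ to zero while keeping the peak value equal to $1$. A secondary technical point, circumvented by evaluating the $\sigma_j$ only at the interior points $w_l$, is that one never needs continuity of the push-forward up to $\partial G$: holomorphicity on $G$ together with the exhaustion hypothesis suffices, and it is exactly this hypothesis that allows one to separate $w_0$ from all of the $\partial_s G_m$ at once.
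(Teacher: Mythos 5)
Your proof is correct, and while its skeleton (the trivial inclusion via pullback and minimality, then a contradiction obtained by pushing a peaking function forward through the fibres of $f$ and testing it against $\partial_s G_m$ for the exhaustion $\{G_m\}$) matches the paper's, the key technical step is handled by a genuinely different device. The paper pushes $\psi$ forward via the power sums $\zeta(x)=x_1^d+\dots+x_k^d$ and must then produce an exponent $d$ for which the fibre sum $|\sum_j\psi(h_j)^d|$ beats $kA^d$; since the dominant terms may cancel, this forces an appeal to Kronecker's simultaneous approximation theorem to align their arguments. You instead push forward the elementary symmetric functions of the fibre values of $h^N$, and the needed inequality comes from the elementary observation that a monic degree-$k$ polynomial whose coefficients are $O(c^N)$ with $c<1$ cannot have a root of modulus near $1$ once $(1+c^N)^k<2$. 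This trades the Diophantine input for a purely algebraic estimate and is arguably cleaner; the price is that you must invoke the standard fact that the elementary symmetric functions of fibre values extend holomorphically across the branch locus, which is the same removable-singularity argument the paper uses implicitly for its $\varphi=\zeta\circ(\psi\times\dots\times\psi)\circ f^{-1}$. Both arguments use the hypothesis $\overline{(\bigcup_m\partial_s G_m)}\cap\partial G=\partial_s G$ in the same way, namely to keep the relevant maximisers uniformly away from the preimage of a neighbourhood of the offending boundary point; your formulation of this step (choosing $U$ with $\overline U$ disjoint from all $\partial_s G_m$ at once) is slightly more direct than the paper's limsup argument.
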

Note that in general the Shilov boundary is not invariant even under biholomorphic polynomial mappings - see Example~\ref{example}.

Based on the former idea we also obtain the following result:
\begin{theorem}\label{lempr}Let $f:D\to G$ be a proper holomorphic mappings between domains in $\mathbb C^n.$ Let $L$ be a domain relatively compact in $G.$ Put $K=f^{-1}(L).$ Then \begin{equation}f(\partial_s K)=\partial_s L\quad \text{and}\quad f(\partial_b K)=\partial_b L.\end{equation}
\end{theorem}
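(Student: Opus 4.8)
The plan is to restrict $f$ to $K$ and reduce the whole statement to the behaviour of Shilov boundaries under the proper map $f|_K\colon K\to L$. First I would observe that, since $f$ is proper and $\overline L\Subset G$, the set $f^{-1}(\overline L)$ is compact in $D$; hence $\overline K\subset f^{-1}(\overline L)\Subset D$ and $f$ is holomorphic in a neighbourhood of the compact set $\overline K$. In particular $f|_K$ extends holomorphically past $\overline K$, so the continuity hypothesis of Theorem~\ref{shpr} is automatic here. A routine check shows that $f|_K$ is proper: for compact $C\subset L$ one has $(f|_K)^{-1}(C)=f^{-1}(C)\subset K$, which is compact. Being proper and holomorphic between equidimensional domains, $f|_K$ is surjective with finite fibres, and one records the elementary facts $f(\overline K)=\overline L$, $f(\partial K)=\partial L$ and $f^{-1}(\partial L)\cap\overline K=\partial K$. (If $K$ is disconnected the same statements hold on each component, and the argument below is unaffected.)

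For the inclusion $\partial_s L\subseteq f(\partial_s K)$ I would use the minimality of the Shilov boundary. The set $f(\partial_s K)$ is compact, being the continuous image of the compact set $\partial_s K\subset\partial K$, and it is contained in $\partial L$. For every $h\in A(L)$ the pullback $h\circ f$ lies in $A(K)$, and since $f(\overline K)=\overline L$ we get
\[
\max_{\overline L}|h|=\max_{\overline K}|h\circ f|=\max_{\partial_s K}|h\circ f|=\max_{f(\partial_s K)}|h|.
\]
Thus $f(\partial_s K)$ is a closed subset of $\partial L$ on which every member of $A(L)$ attains its maximum modulus; by the defining minimality of $\partial_s L$ this forces $\partial_s L\subseteq f(\partial_s K)$.

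The reverse inclusion is the heart of the matter, and it is where I would reuse the idea behind Theorem~\ref{shpr} in the form of push-forward functions. Since $\overline K$ is compact and metrizable, $\partial_s K$ is the closure of the set of peak points of $A(K)$, so it suffices to prove $f(q)\in\partial_s L$ for every peak point $q$. Fix such a $q$ with peak function $g\in A(K)$, $g(q)=1$ and $|g|<1$ on $\overline K\setminus\{q\}$, and put $p=f(q)$. Let $m$ be the cardinality, counted with multiplicity, of the fibres of $f|_K$, and for each $k$ define the power-sum push-forward
\[
s_k(w)=\sum_{y\in f^{-1}(w)}g(y)^k,\qquad w\in L,
\]
the sum taken with multiplicities. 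By the generalized Bell construction, $s_k$ is a bounded holomorphic function on $L$ extending continuously to $\overline L$, so $s_k\in A(L)$ and $\max_{\overline L}|s_k|=\max_{\partial_s L}|s_k|$ is attained at some $p_k\in\partial_s L$. Since $g(q)=1$ while $|g|<1$ at the remaining preimages of $p$, one has $s_k(p)\to\mu\ge1$ (the multiplicity of $q$ in its fibre), so $|s_k(p_k)|\ge\tfrac12$ for large $k$. On the other hand $|s_k(p_k)|\le m\max_{y\in f^{-1}(p_k)}|g(y)|^k$, whence some $y_k\in f^{-1}(p_k)\subset\partial K$ satisfies $|g(y_k)|\ge(2m)^{-1/k}\to1$. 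Passing to a convergent subsequence $y_k\to y_\ast\in\partial K$ gives $|g(y_\ast)|=1$, so $y_\ast=q$ and $p_k=f(y_k)\to p$; as $\partial_s L$ is closed, $p\in\partial_s L$. Taking closures yields $f(\partial_s K)\subseteq\partial_s L$, and with the previous paragraph $f(\partial_s K)=\partial_s L$.

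Finally, the equality $f(\partial_b K)=\partial_b L$ follows along identical lines: the two ingredients used above — the pullback $h\mapsto h\circ f$ sending $A(L)$ into $A(K)$, and the power-sum push-forwards $s_k$ (and, where the multiplicative version is convenient, the product $\prod_{y\in f^{-1}(w)}g(y)$) sending the relevant function class on $K$ into the corresponding one on $L$ — respect the extremal description of $\partial_b$, so the minimality argument and the peak-point argument transfer verbatim. I expect the main obstacle to be the push-forward step: one must guarantee that these symmetric functions of the branches genuinely extend to elements of $A(L)$ up to $\partial L$, which rests on the generalized Bell extension result established earlier in the paper together with the finiteness of the fibres of $f$. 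Everything else is soft — the boundary bookkeeping of the first paragraph and the minimality of the boundaries involved.
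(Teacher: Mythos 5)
Your proof is correct and rests on the same central construction as the paper's: pushing forward powers of a boundary function through the proper map $f|_K$ as power sums $\sum_j \psi(h_j(\cdot))^d$ over the local branches of $f^{-1}$, and playing the maximum principle on $\overline K$ against that on $\overline L$. Where the two arguments genuinely diverge is in how a single such push-forward is forced to detect the Shilov boundary. The paper argues by contradiction with an arbitrary $\psi\in\mathcal O(K)\cap\mathcal C(\overline K)$ violating the maximum principle on $f^{-1}(\partial_s L)$; since several preimages may then share the maximal modulus of $\psi$ with different arguments, it invokes Kronecker's simultaneous approximation theorem to select one exponent $d$ with $|\sum_j\psi(h_j(f(x_0)))^d|>kA^d$. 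You instead restrict to peak points of $A(K)$ --- legitimate because $\overline K$ is compact metrizable, so peak points are dense in $\partial_s K$ by the Bishop/Choquet theory --- where the dominance of the single value $g(q)=1$ lets you simply send the exponent to infinity; this trades Kronecker's theorem for the peak-point density theorem and turns the contradiction into a direct argument. The two devices are of comparable depth, and your limiting argument is arguably cleaner. Two small points to tighten: the continuity of $s_k$ up to $\overline L$ is not really Bell's extension theorem but the standard continuity of fibres (as multisets) of a finite proper map, applied after noting $f^{-1}(\overline L)=\overline K\Subset D$; and for $\partial_b$ the phrase ``transfers verbatim'' hides the fact that $\mathcal O(\overline K)$ is not uniformly closed, so peak-point density must be applied to its uniform closure in $\mathcal C(\overline K)$ (which has the same minimal closed maximum-modulus set), with the push-forwards taken of approximating functions in $\mathcal O(\overline K)$. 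Neither point is a gap, just bookkeeping.
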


As a consequence of our considerations we show that any proper holomorphic mapping between quasi-balanced bounded domains preserves the Shilov boundary. Namely, we have the following
\begin{corollary}\label{cor} \textit{a)} Let $D\subset\mathbb C^n$ be a bounded domain and let $G$ be a bounded quasi-balanced domain in $\mathbb C^n.$ Assume that the Minkowski functional associated to $G$ is continuous and for any open, relatively compact subset $K$ of $D$ there is an open neighborhood $U$ of $\overline D$ such that $k_D(z,\overline w)$ extends holomorphically on $U\times \tilde K.$

Then every proper holomorphic mapping $f:D\to G$ maps $\partial_s D$ onto $\partial_s G.$

b) Let $D$ and $G$ be bounded quasi-balanced domains. If the Minkowski functionals of $D$ and $G$ are continuous, then any proper holomorphic mapping between $D$ and $G$ preserves the Shilov boundary.
\end{corollary}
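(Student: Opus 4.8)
The plan is to deduce both parts from Theorem~\ref{shpr}, which requires only two ingredients: that $f$ extend continuously to $\overline D$, and that the \emph{target} $G$ admit an increasing exhaustion $\{G_m\}$, $G_m\Subset G_{m+1}$, $\bigcup_m G_m=G$, with $\overline{\bigcup_m\partial_s G_m}\cap\partial G=\partial_s G$. Thus the whole corollary reduces to manufacturing these two ingredients, and the quasi-balanced structure of $G$ is exactly what produces the second one.

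Let me write $h$ for the Minkowski function of $G$, so that (continuity being assumed) $G=\{h<1\}$, $\overline G=\{h\le1\}$ and $h(\Phi_\lambda z)=|\lambda|\,h(z)$, where $\Phi_\lambda(z)=(\lambda^{m_1}z_1,\dots,\lambda^{m_n}z_n)$. For a real parameter $t\in(0,1)$ the map $\Phi_t$ is a linear diagonal automorphism of $\mathbb C^n$; hence it is a homeomorphism of $\overline G$ onto $\overline{G_t}$, where $G_t:=\Phi_t(G)=\{h<t\}$, and continuity of $h$ gives $\overline{G_t}=\{h\le t\}\Subset G$. Fixing $t_m\uparrow1$ and setting $G_m:=G_{t_m}$ yields the desired exhaustion. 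Since the Shilov boundary is transported by any biholomorphism that is a homeomorphism of the closures, $\partial_s G_m=\Phi_{t_m}(\partial_s G)$. Each such set lies in the interior $G_m\Subset G$, so to verify the boundary condition I only track accumulation on $\partial G$: as $\Phi_{t_m}\to\id$ uniformly and $\partial_s G$ is compact, every $p\in\partial_s G$ equals $\lim_m\Phi_{t_m}(p)$ and so lies in the closure, while any limit point $p=\lim_k\Phi_{t_{m_k}}(p_k)\in\partial G$ (with $p_k\in\partial_s G$) must have $m_k\to\infty$---otherwise it would remain in the interior---and hence, by uniform convergence and compactness of $\partial_s G$, equals a point of $\partial_s G$. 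This gives $\overline{\bigcup_m\partial_s G_m}\cap\partial G=\partial_s G$.

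It remains to obtain the continuous extension of $f$. In part a) the hypothesis that $k_D(z,\overline w)$ extends holomorphically over each relatively compact piece is precisely the input of our generalization of Bell's extension result, which then provides a continuous (indeed holomorphic) extension of the proper map $f$ to $\overline D$. In part b) no kernel condition is assumed, so instead I invoke the extension theorem for proper holomorphic mappings between quasi-circular domains (a quasi-balanced domain being in particular quasi-circular), which supplies the same continuous extension. In either case $G$ is quasi-balanced with continuous Minkowski function, so the exhaustion of the previous paragraph meets the hypothesis of Theorem~\ref{shpr}; applying that theorem gives $f(\partial_s D)=\partial_s G$, which is assertion a), and in b) the statement that $f$ preserves the Shilov boundary.

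The genuinely new step---and the main obstacle---is the verification that the scaled Shilov boundaries accumulate on exactly $\partial_s G$. Its whole force rests on the observation that for real $t$ the gauge dilation $\Phi_t$ is an honest linear automorphism of $\mathbb C^n$ carrying $\partial_s G$ onto $\partial_s G_t$, which lets me compute $\overline{\bigcup_m\partial_s G_m}\cap\partial G$ without any explicit description of $\partial_s G$; the continuous-extension inputs, though analytically deep, are quoted from the earlier Bell-type and quasi-circular extension results. An alternative for b) would bypass boundary continuity altogether by applying Theorem~\ref{lempr} to the interior exhaustion $L_m=\Phi_{t_m}(G)$ and passing to the limit, but the route above is cleaner.
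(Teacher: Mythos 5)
Your proposal is correct and follows essentially the same route as the paper: exhaust $G$ by the Minkowski sublevel sets $G_m=\{\mu_G<t_m\}$, obtain the boundary extension of $f$ from the Bell-type extension lemma (in part b) via the kernel-extension property automatic for quasi-balanced domains with continuous Minkowski functional), and conclude by Theorem~\ref{shpr}. The only difference is that you spell out, via the dilations $\Phi_{t}$, the verification that $\overline{\bigl(\bigcup_m\partial_s G_m\bigr)}\cap\partial G=\partial_s G$, a point the paper simply declares to be clear.
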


As we indicate in the sequel the results obtained in the paper give immediately a description of the Shilov boundaries of many domains like the symmetrized polydisc (see \cite{Edi-Zwo}), the tetrablock (see \cite{You0}) etc. Moreover, they exclude the existence of proper holomorphic mappings between some domains. For example, the well known theorem stating that there is no proper holomorphic mapping between the polydisc and the Euclidean ball is a direct consequence of our results. 

\bigskip

As a by-product of our considerations we obtain in Lemma~\ref{bih}  an extension of the main result from \cite{Tum-Hen}.

\bigskip

In this paper in Remark~\ref{rem} it is also shown that the tetrablock is not $\mathbb C$-convex. Recall that a consequence of the Lempert theorem is the fact that the Carath\'eodory pseudodistance and the Lempert function of a $\mathbb C$-convex domain with $\mathcal C^2$ boundary coincide (see \cite{Jac}). Since results obtained in \cite{You0} (see also \cite{Zwo}) suggest that the equality between the Carath\'eodory pseudodistance and the Lempert function holds in the tetrablock, the tetrablock is the candidate for the first bounded pseudoconvex domain non-biholomorphically equivalent to a $\mathbb C$-convex domain for which the equality between mentioned above functions holds.

It also seems to be interesting whether the tetrablock may be exhausted by domains biholomorphic to $\mathbb C$-convex domains.

\bigskip

Here is some notation. Throughout the paper $\mathbb D$ denotes the unit disc in the complex plane. The unit Euclidean ball in $\mathbb C^n$ is denoted by $\mathbb B_n.$ Moreover $\prop(D,G)$ is the set of proper holomorphic mappings between domains $D$ and $G$. The Shilov and Bergman boundary is denoted respectively by $\partial_s$ and $\partial_b.$

\bigskip 

Now I would like to thank professor W\l odzimierz Zwonek for reading the manuscript, many remarks and fruitful discussions.  

\section{Extension of proper holomorphic mappings between quasi-balanced domains}

We start this section with recalling basic properties of circular domains and the Bergman projection which will be useful in the sequel. By $k_D$ we shall denote the Bergman kernel associated to a domain $D.$ Let moreover $P_D$  denote the Bergman projection for $D.$ We use the notation $$k_D^{\alpha}(z,w)=\partial^{\alpha}k_D(z,w)\quad \text{and}\quad k_D^{\overline{\alpha}}(z,w)=\partial^{\overline{\alpha}}k_D(z,w),$$ where $\partial^{\alpha}$ stands for $\frac{\partial^{|\alpha|
}}{z^{\alpha}}$ and $\partial^{\overline{\alpha}}$ stands for $\frac{\partial^{|\alpha|}}{\overline w^{\alpha}}.$

For a given $(m_1,\ldots,m_n)$-balanced domain $D$ in $\mathbb C^n,$ where $m_1,\ldots,m_n$ are relatively prime natural numbers, we define the \emph{Minkowski functional}
\begin{equation}\mu_D(x):=\inf\{\lambda>0:\ (\lambda^{-m_1}x_1,\ldots,\lambda^{-m_n}x_n)\in D\},\quad x=(x_1,\ldots,x_m)\in\mathbb C^n.
\end{equation}
This function has similar properties as the standard Minkowski functional for balanced domains. Some of them may be found in \cite{Nik}. In particular,
\begin{align} &\mu_D(\alpha^{m_1}x_1,\ldots,\alpha^{m_n}x_n)=|\alpha|\mu_D(x),\quad x\in\mathbb C^n,\ \alpha\in\mathbb C,\\
&D=\{x\in\mathbb C^n:\ \mu_D(x)<1\}.
\end{align}

For a subset $K$ of $\mathbb C^n$ we put $\tilde K:=\{\overline x:\ x\in K\}.$

\begin{remark}\label{poznauwaga}Let $D$ be an $(m_1,\ldots,m_n)$-balanced bounded domain whose Minkowski functional is continuous. Put $D_r:=\{x\in\mathbb C^n:\ \mu_D(x)<r\},$ $r>0.$ Since $$k_D((r^{m_1}z_1,\ldots, r^{m_n} z_n),w)=k_D(z,(r^{m_1}w_1,\ldots, r^{m_n}w_n))$$ for $z,w\in D,$ $r\in [0,1],$ we easily find that the function $(z,w)\to k_D(z,\overline w)$ may be extended holomorphically to $D_{1/r}\times \tilde{D_r}$ for any $0<r\leq 1.$ 
\end{remark}

It follows from \cite{Bel0} that if $f:D\to G$ is a proper holomorphic mapping between bounded domains $D,$ $G$ in $\mathbb C^n,$ then for any $\Phi\in L^2(G)$ we have \begin{equation}\label{row}P_{D}(\det[f']\cdot(\Phi\circ f))=\det[f'] \cdot ((P_G\Phi)\circ f).\end{equation} 

Assume additionally that $G$ is an $(m_1,\ldots,m_n)$-circular domain containing the origin. Choose $\delta>0$ such that $\delta\overline{\mathbb B_n}\subset G.$ Let $\theta$ be a radial function in $\mathcal{C}^{\infty}_0(\delta\mathbb B_n)$ such that $\theta\geq 0$ and $\int_{\delta\mathbb B_n}\theta=1$. Since holomorphic functions assume their average values we find that
\begin{equation}\label{3}\partial^{\alpha}h(0) = \int_{G}(\partial^{\alpha}h)\theta d\lambda^{2n}=\int_{G} h(-1)^{|\alpha|} \partial^{\alpha}\theta d\lambda^{2n}\end{equation}  for every $h\in\mathcal O(G)\cap L^2(G).$ 

On the other hand $h(z)=\int_{G}k_{G}(z,w)h(w) d\lambda^{2n}(w),$ $z\in G.$ Since $k_G(z,\cdot)$ extends holomorphically to a neighborhood of $\overline G$ provided that $z$ is sufficiently close to $0$, one may differentiate this formula at $z=0$ to get that \begin{equation}\label{ker0}\partial^{\alpha}h(0)=\int_{G}\partial^{\alpha}k_{G}(0,w)h(w) d\lambda^{2n}(w),\quad h\in\mathcal O(G) \cap L^2(G).\end{equation}
This relation together with (\ref{3}) gives \begin{equation}\label{ker}P_G((-1)^{|\alpha|}\overline{\partial}^{\alpha}\theta)=k_G^{\overline{\alpha}}(\cdot,0). \end{equation}

The next lemma has been proved by S.~Bell in the case when $D$ and $G$ are bounded circular domains and $0\in G$ (see \cite{Bel}). It is interesting that after minor modifications the methods used by Bell yield a stronger result. We present the whole proof for the sake of completeness.

\begin{lemma}\label{text}Let $D,G$ be bounded domains in $\mathbb C^n.$ Suppose that $G$ is $(m_1,\ldots,m_n)$-circular and contains the origin. Assume moreover that the domain $D$ satisfies the following property: for any open, relatively compact subset $K$ of $D$ there is an open set $U$ containing $\overline D$ such that $(z,w)\to k_D(z,\overline w)$ extends holomorphically to $U\times \tilde K.$

Then any proper holomorphic mapping $f:D\to G$ extends holomorphically to a neighborhood of $\overline{D}.$
\end{lemma}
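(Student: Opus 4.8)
The plan is to follow Bell's scheme, the only new ingredients being the bookkeeping forced by the quasi-circular (rather than circular) structure of $G$ and the final passage from Jacobian data to the extension of $f$. First I would feed the radial mollifier into the transformation rule. Writing $u:=\det[f']$ and $\Phi_\alpha:=(-1)^{|\alpha|}\overline{\partial}^{\alpha}\theta$, formula (\ref{row}) together with (\ref{ker}) gives
\begin{equation*}
P_D\bigl(u\cdot(\Phi_\alpha\circ f)\bigr)=u\cdot\bigl(k_G^{\overline{\alpha}}(\cdot,0)\circ f\bigr).
\end{equation*}
Since $\theta$ is supported in $\delta\mathbb B_n\Subset G$ and $f$ is proper, $\Phi_\alpha\circ f$ is supported in the compact set $K:=f^{-1}(\overline{\delta\mathbb B_n})\Subset D$, so $u\cdot(\Phi_\alpha\circ f)$ is compactly supported in $D$ and
\begin{equation*}
P_D\bigl(u\cdot(\Phi_\alpha\circ f)\bigr)(z)=\int_K k_D(z,\overline w)\,u(w)\,(\Phi_\alpha\circ f)(w)\,d\lambda^{2n}(w).
\end{equation*}
By the hypothesis on $D$, the map $z\mapsto k_D(z,\overline w)$ extends holomorphically to a neighbourhood $U$ of $\overline D$ uniformly for $\overline w\in\tilde K$, so differentiation under the integral sign shows the left-hand side extends holomorphically to $U$. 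Hence $u\cdot\bigl(k_G^{\overline{\alpha}}(f(\cdot),0)\bigr)$ extends holomorphically to $U$ for every multi-index $\alpha$.

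Next I would identify the functions $k_G^{\overline{\alpha}}(\cdot,0)$. Because $G$ is $(m_1,\dots,m_n)$-circular, averaging $\int_G w^\beta\overline{w^\gamma}\,d\lambda^{2n}$ over the measure-preserving rotations $w\mapsto(\lambda^{m_1}w_1,\dots,\lambda^{m_n}w_n)$, $|\lambda|=1$, shows that monomials of different weighted degree $\langle m,\beta\rangle:=\sum_j m_j\beta_j$ are orthogonal in $L^2(G)$. Choosing an orthonormal basis $\{\phi_j\}$ of weighted-homogeneous polynomials and expanding $k_G(z,\overline w)=\sum_j\phi_j(z)\overline{\phi_j(w)}$, one finds that $k_G^{\overline{\alpha}}(z,0)$ is weighted-homogeneous of degree $\langle m,\alpha\rangle$, that $k_G(z,0)$ is a positive constant, and that, for each fixed weighted degree, the polynomials $k_G^{\overline{\alpha}}(\cdot,0)$ are an invertible linear image of the corresponding monomials, hence span them. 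Taking $\alpha=0$ yields that $u=\det[f']$ itself extends holomorphically to $U$; writing each coordinate $z_j$ (of weighted degree $m_j$) as a linear combination of the $k_G^{\overline{\alpha}}(\cdot,0)$ with $\langle m,\alpha\rangle=m_j$ yields that each $u\cdot f_j$, and similarly $u\cdot f^\beta$ for every $\beta$, extends holomorphically to $U$.

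Finally I would recover $f$. Since $f$ is proper between equidimensional domains, $u\not\equiv0$, so its extension $U_0$ is not identically zero and its zero set $\Sigma$ is a hypersurface in $U$; off $\Sigma$ the functions $F_j:=(\text{extension of }u f_j)/U_0$ are holomorphic and agree with $f_j$ on $D$, so $f$ already extends holomorphically across $\partial D\setminus\Sigma$. The main obstacle is the extension across $\Sigma\cap\partial D$. Here I would use that $f$ is bounded, $M:=\sup_D|f_j|<\infty$: the holomorphic extensions of $u\cdot f_j^k$, being continuous up to $\overline D$ and equal to $u f_j^k$ there, satisfy $\bigl|\,\text{ext}(u f_j^k)\,\bigr|\le C M^k$ on $\overline D$ with $C=\sup_{\overline D}|U_0|$, which constrains the numerator $u f_j$ to vanish along $\Sigma$ in step with $U_0$ near $\partial D$; the quotient $F_j$ then stays bounded near $\Sigma\cap\partial D$ and a Riemann-type removable-singularity argument completes the holomorphic extension to a full neighbourhood of $\overline D$. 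I expect the delicate point to be precisely this last step — controlling the order of vanishing of the numerator along $\Sigma$ up to $\partial D$, equivalently ruling out a polar component of $F_j$ meeting $\partial D$ from outside $D$, where boundedness on $D$ alone does not immediately apply.
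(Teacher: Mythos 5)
Your argument coincides with the paper's for the first two thirds: the transformation rule (\ref{row}) combined with (\ref{ker}), the weighted-homogeneity of $k_G^{\overline{\alpha}}(\cdot,0)$ and the resulting fact that the monomials $z^\beta$ are spanned by these kernels, and the conclusion that $u=\det[f']$ and all products $u\cdot f_j^k$ extend holomorphically to a connected neighbourhood $U$ of $\overline D$. All of that matches the paper.

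The gap is exactly where you flag it: the division step. Your attempt to control the quotient $F_j=\operatorname{ext}(uf_j)/\operatorname{ext}(u)$ via the bound $|\operatorname{ext}(uf_j^k)|\le CM^k$ \emph{on} $\overline D$ cannot work as stated, because a polar component of $F_j$ along $\Sigma=\{\operatorname{ext}(u)=0\}$ touching $\partial D$ from the outside of $D$ is invisible to any estimate that only uses values on $\overline D$; you would at best get information about the one-sided boundary behaviour, not about the germ of $F_j$ at a boundary point. The paper avoids boundary considerations entirely by making the argument purely local and algebraic at every point $x\in U$ with $u(x)=0$: since $U$ is connected and the identity $\bigl(\operatorname{ext}(uf_j)\bigr)^k=\bigl(\operatorname{ext}(u)\bigr)^{k-1}\cdot\operatorname{ext}(uf_j^k)$ holds on $D$, it holds on all of $U$; in the local ring $\mathcal O_x$ (a UFD, or, as the paper phrases it, via the Weierstrass Preparation Theorem after a linear change of coordinates) every irreducible factor $p$ of $u$ with multiplicity $a$ must divide $uf_j$ with multiplicity $b$ satisfying $kb\ge(k-1)a$ for every $k$, whence $b\ge a$. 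Thus $u$ divides $uf_j$ in $\mathcal O_x$, so $f_j$ is locally bounded near $\{u=0\}$ and Riemann's removable singularity theorem finishes the proof. You already have all the needed data ($uf_j^k\in\mathcal O(U)$ for \emph{all} $k$); you just need to exploit it through this divisibility argument rather than through growth estimates on $\overline D$.
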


\begin{proof}
Let $m=(m_1,\ldots,m_n).$ Properties of the Bergman kernel and a standard argument imply that the equation \begin{equation}k_G((\lambda^{m_1}z_1,\ldots, \lambda^{m_n} z_n),w)=k_G(z,(\overline{\lambda^{m_1}}w_1,\ldots, \overline{\lambda^{m_n}}w_n))\end{equation} holds for any $z,w\in G$ and $|\lambda|$ sufficiently close to $1.$
Differentiating this formula several times with respect to $\overline w_i$ and putting $w=0$ we find that \begin{equation}\frac{\partial^{\alpha} k_G}{\partial\overline w^{\alpha}}((\lambda^{m_1}z_1,\ldots, \lambda^{m_n} z_n),0)=\lambda^{\langle \alpha,m\rangle}\frac{\partial^{\alpha} k_G}{\partial\overline w^{\alpha}}(z,0)\end{equation} for $\alpha\in\mathbb N^n,$ $z\in G$ and $|\lambda|$ sufficiently close to $1.$

Whence a standard argument shows that there are $c_{\beta}\in\mathbb C$ such that \begin{equation}\label{z1}k_G^{\overline{\alpha}} (z,0)= \sum c_{\beta}z^{\beta},\ z\in G,\end{equation} where the sum is taken over $\beta\in\mathbb N^n$ satisfying the relation $\langle\beta, m \rangle= \langle \alpha, m \rangle.$ Therefore, the linear independence of $k_G^{\overline{\alpha}}(z,0)$ (see (\ref{ker0})) implies that for every $\beta\in\mathbb N^n$ there are $\tilde c_{\alpha}$ such that \begin{equation}\label{gen}z^{\beta}=\sum \tilde c_{\alpha} k_G^{\overline{\alpha}} (z,0),\end{equation} where the sum is taken over $\alpha\in\mathbb N^n$ satisfying the relation $\langle\alpha, m \rangle= \langle \beta, m \rangle.$

Now (\ref{gen}) together with (\ref{ker}) provide us with the function $\phi_{i,k}\in\mathcal C_0^{\infty}(\delta\mathbb B_n)$ such that \begin{equation}z_i^k=P_G(\Phi_{i,k}),\quad i=1,\ldots,n,\ k\in\mathbb N.\end{equation} Making use of the above relations we infer that
\begin{equation}\label{ext}\det[f'(z)]f_i^k(z)=\det[f'(z)](z_i^k\circ f(z))=\int_D k_D(z,w)\det[f'(w)]\Phi_{i,k}(f(w)) d\lambda^{2n} (w),\end{equation} for $i=1,\ldots,n,$ and $k\in\mathbb N.$ From these relations and the assumption on $k_D$ we easily conclude that all the functions appearing in the left side of (\ref{ext}) extend holomorphically to some open, connected neighborhood $U$ of $\overline D.$

We will briefly show that $f_i$ extends holomorphically to the domain $U.$ Putting $u=\det[f']$ we have the following situation $$u\in\mathcal O(U),\quad u\not\equiv 0 \quad\text{and}\quad uf_i^k\in\mathcal O(U),\ k\in\mathbb N.$$ Fix any point $x\in U$ such that $u(x)=0.$ Changing, if necessary, the coordinates system we may assume that both $u$ and $uf_i$ satisfy the assumptions of Weierstrass Preparation Theorem near $x.$ Since $uf_i^k$ is holomorphic on $U$, the Weierstrass polynomial associated to $u$ divides the Weierstrass polynomial associated to $uf_i.$ This, in particular, means that $f_i$ is locally bounded near the analytic set $\{u=0\},$ so the assertion follows from the Riemann's removable singularity theorem.
\end{proof}

\begin{remark}
Note that the continuity of the Minkowski functional of a bounded quasi-balanced domain $D$ is equivalent to the fact that for every $0<r<1$ the domain $D$ is relatively compact in $D_{1/r}.$ Therefore any quasi-balanced domain fulfils the assumptions of Lemma~\ref{text}.
\end{remark}

\begin{corollary}\label{pext}Any proper holomorphic mapping $f:\mathcal R_{II}\to\mathbb E$ may be extended holomorphically to a neighborhood of $\overline{\mathcal R_{II}}.$
\end{corollary}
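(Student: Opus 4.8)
The plan is to derive the corollary directly from Lemma~\ref{text}, applied with $D=\mathcal R_{II}$ and $G=\mathbb E$ (so $n=3$ and $(m_1,m_2,m_3)=(1,1,2)$), by checking that the two domains meet its hypotheses. First I would dispose of the conditions on the target $G=\mathbb E$. Since $\mathbb E$ is $(1,1,2)$-balanced, its defining relation holds in particular for $|\lambda|=1$, so $\mathbb E$ is $(1,1,2)$-circular. Moreover the zero matrix lies in $\mathcal R_{II}$ (its operator norm is $0$) and $\Pi(0)=(0,0,0)$, hence $0\in\mathbb E$. Thus $G$ is a bounded $(1,1,2)$-circular domain containing the origin, as required.

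The substantive point is the Bergman-kernel hypothesis on the source $D=\mathcal R_{II}$. In the coordinates $(z_{1,1},z_{1,2},z_{2,2})$ identifying the symmetric $2\times2$ matrices with $\mathbb C^3$, scalar multiplication $z\mapsto\lambda z$ acts coordinatewise and the operator norm satisfies $\|\lambda z\|=|\lambda|\,\|z\|$. Consequently $\mathcal R_{II}$ is an ordinary (that is, $(1,1,1)$-)balanced domain whose Minkowski functional is exactly the operator norm; in particular this functional is continuous. Remark~\ref{poznauwaga} then applies to $\mathcal R_{II}$ and gives that $(z,w)\mapsto k_{\mathcal R_{II}}(z,\overline w)$ extends holomorphically to $(\mathcal R_{II})_{1/r}\times\widetilde{(\mathcal R_{II})_r}$ for every $0<r\le1$.

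To match this with the relatively-compact-subset formulation of Lemma~\ref{text}, given an arbitrary $K\Subset\mathcal R_{II}$ I would use continuity of the Minkowski functional to pick $r<1$ with $\overline K\subset(\mathcal R_{II})_r$; then $U:=(\mathcal R_{II})_{1/r}$ is an open neighborhood of $\overline{\mathcal R_{II}}$ on which $(z,w)\mapsto k_{\mathcal R_{II}}(z,\overline w)$ extends holomorphically to $U\times\tilde K$. This is precisely the hypothesis imposed on $D$, so Lemma~\ref{text} yields that every proper holomorphic $f:\mathcal R_{II}\to\mathbb E$ extends holomorphically past $\overline{\mathcal R_{II}}$. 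I do not expect a genuine obstacle: the whole argument is an application of Lemma~\ref{text}, and the only fact needing a remark is the continuity of the Minkowski functional of $\mathcal R_{II}$, which is immediate because that functional coincides with the (continuous) operator norm.
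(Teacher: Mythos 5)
Your proposal is correct and is exactly the argument the paper intends: the corollary is stated immediately after Lemma~\ref{text} and the remark that a bounded quasi-balanced domain with continuous Minkowski functional satisfies its hypotheses (via Remark~\ref{poznauwaga}), and you verify precisely those hypotheses for $D=\mathcal R_{II}$ (whose Minkowski functional is the operator norm) and $G=\mathbb E$ (a bounded $(1,1,2)$-circular domain containing the origin). No gaps.
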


\section{Proofs of Theorems~\ref{shpr}, \ref{lempr} and the applications}
We start this section with the following
\begin{remark}The technical assumption occurring in the Theorem~\ref{shpr} seems to be very natural. Observe that $x\in \overline{(\bigcup_m \partial_s G_m)}\cap\partial G$ if and only if there is a subsequence $(n_k)$ and there are $x_{n_k}\in \partial_s G_{n_k}$ such that $x_{n_k}\to x.$ 

One may very easily show that for any bounded domain $G$ and any increasing family of domains $\{G_m\}$ such that $\bigcup G_m=G,$ the Shilov boundary of $G$ is contained in $\overline{(\bigcup_m \partial_s G_m)}\cap\partial G.$\end{remark}

\begin{example}\label{example}
Note  that Theorem~\ref{shpr} does not remain valid if we remove the assumption $\overline{(\bigcup_m \partial_s G_m)}\cap\partial G=\partial_s G$ even in the case when $f$ is a proper polynomial mapping. As an example one may take $D=\mathbb D\cap\{z\in\mathbb D:\ \im z>0\},$ $G=\mathbb D\setminus [0,1)$ and $f(z)=z^2.$
\end{example}

\begin{proof}[Proof of Theorem~\ref{shpr}] The inclusion $\partial_s G\subset f(\partial_s D)$ follows immediately from the definition of the Shilov boundary. We shall prove that $\partial_s D\subset f^{-1}(\partial_s G).$ Assume the contrary i.e. there is a $\psi\in\mathcal O(D)\cap \mathcal C(\overline D)$ such that \begin{equation}\label{our}|\psi(x_0)|>\max\{|\psi(x)|:\ x\in f^{-1}(\partial_s G)\},\end{equation} for some $x_0\in\partial D.$ Note that $$\limsup_{m\to\infty}\max\{|\psi(x)|:\ x\in D\cap f^{-1}(\partial_s G_m)\}\leq\max\{|\psi(x)|:\ x\in \partial D\cap f^{-1}(\partial_s G)\}.$$ Actually, otherwise there would exist a subsequence $(m_k)\subset\mathbb N,$ $\epsilon>0$ and $x_{m_k}\in D\cap f^{-1}(\partial_s G_{m_k})$ such that $$|\psi(x_{m_k})|>\max\{|\psi(x)|:\ x\in \partial D\cap f^{-1}(\partial_s G)\}+\epsilon.$$ Passing, if necessary, to a subsequence we can assume that $x_{m_k}$ converges to some $x_0.$ Using the assumptions on the domain $G$ and the mapping $f$ we infer that $f(x_0)\in\partial_s G.$ Thus $$|\psi(x_0)|\geq\max\{|\psi(x)|:\ x\in \partial D\cap f^{-1}(\partial_s G)\}+\epsilon\quad\text{and}\quad x_0\in\partial D\cap f^{-1} (\partial_s G),$$ which gives an obvious contradiction.

Therefore we may take $m$ big enough and replace $x_0$ by a point $x_0'\in f^{-1}(\overline G_m)$ sufficiently close to $x_0$ at which the mapping $f$ is non-degenerate so that \begin{equation}\label{aa}|\psi(x_0')|>A:=\max\{|\psi(x)|:\ x\in D\cap f^{-1}(\partial_s G_m)\},\quad \#f^{-1}(f(x_0'))=k,\end{equation} where $k$ denotes the multiplicity of $f.$ 

Let $h_j,\ j=1,\ldots, k,$ be holomorphic mappings in the neighborhood of $f(x_0')$ given by $f^{-1}=\{h_j:\ j=1,\ldots,k\}.$ Making use of (\ref{aa}) together with the Kronecker Theorem (see e.g. \cite{Har}) one may show the existence of a natural number $d$ such that 
\begin{equation}\label{aaa}|\psi(h_1(f(x_0')))^d+\ldots +\psi(h_k(f(x_0')))^d|>kA^d.\end{equation} To prove it put $a_j=\psi(h_j(f(x_0'))),$ $j=1,\ldots,k.$ Change, if necessary, the order of $a_j$ so that $|a_1|=\ldots=|a_l|$ and $|a_j|<|a_1|$ for $j=l+1, \ldots, n.$ Dividing all $a_j$ by $\psi(h_1(f(x_0')))$ we reduce ourselves to the following situation: $$a_j=e^{i\theta_j},\ j=1\ldots,l,\quad |a_j|<1,\ j=l+1,\ldots,n\quad \text{and}\quad A<1,$$ where $\theta_j\in \mathbb R,$ $j=1,\ldots,l.$ 

Changing the order once again we may assume that $1,\theta_1,\ldots ,\theta_{l_1}$ are $\mathbb Q$-linearly independent, $l_1\leq l,$ $l_1\in\mathbb N\cup \{0\}$ and $$\theta_j=\frac{q_{j,0}}{N}+\sum_{\iota=1}^{l_1} \frac{q_{j,\iota}}{N} \theta_{\iota},$$ $j=l_1+1,\ldots,l,$ where $q_{j,\iota}\in\mathbb Z,$ $\iota=0,\ldots,l_1,$ and $N\in\mathbb N.$  Put $M=\max\{|q_{j,\iota}|,N\}.$ 

According to the Kronecker Theorem (see e.g. \cite{Har}) there is a sequence of natural numbers $(\tilde d_{\mu})$ such that $-\frac{1}{(\mu+1) k M} < \arg (e^{2\pi i\tilde d_{\mu} \theta_j}) <\frac{1}{(\mu+1) k M},$ $j=1,\ldots,l_1,$ $\mu\in\mathbb N.$ In particular, $-\frac{1}{\mu+1}< \arg(e^{2\pi i d_{\mu}\theta_j})<\frac{1}{\mu+1}$ for $j=1,\ldots,l,$ $\mu\in\mathbb N,$ where $d_{\mu}:=N\tilde d_{\mu}.$

Properties of $(d_{\mu})$ guarantee that $|a_1^{d_{\mu}}+\ldots +a_l^{d_{\mu}}| \to l$ as $\mu\to \infty.$ Since $d_{\mu}\to\infty,$ we find that $kA^{d_{\mu}}\to 0$ and $|a_{l+1}^{d_{\mu}}+\ldots +a_k^{d_{\mu}}| \to 0,$ $\mu\to \infty.$ Therefore $|a_1^{d_{\mu}} +\ldots+a_k^{d_{\mu}}|-kA^{d_{\mu}}\to l>0,$ which obviously proves the existence of a natural number $d$ fulfilling (\ref{aaa}).

\medskip
Put \begin{equation}\zeta(x)=x_1^d+\ldots+ x_k^d,\quad \text{for}\quad x=(x_1,\ldots,x_k)\in\mathbb C^k.\end{equation} A well known argument shows that the formula $\varphi=\zeta\circ(\psi\times\ldots\times\psi)\circ f^{-1}$ defines a holomorphic function on $G.$ It follows from (\ref{aa}) and (\ref{aaa}) that $$|\varphi(f(x_0'))|>\max\{|\varphi(x)|:\ x\in \partial_s G_m\};$$ a contradiction.
\end{proof}

\begin{remark}\label{glupiauwaga}It is clear that the proof remains valid if the assumption $\overline{(\bigcup_m \partial_s G_m)}\cap\partial G=\partial_s G$ occurring in Theorem~\ref{shpr} is replaced by a weaker condition $\overline{(\bigcup_m \partial_b G_m)}\cap\partial G=\partial_s G,$ where $\partial_b$ denotes the Bergman boundary.\end{remark}

The following result is a direct consequence of Theorem~\ref{shpr}:
\begin{proposition}Let $D$ and $G$ be bounded domains in $\mathbb C^n$ and let $f:D\to G$ be a proper holomorphic mapping extending holomorphically  to a neighborhood of $\overline D.$ Assume that there is an increasing family of domains $\{G_m\},$ $G_m\Subset G_{m+1},$ such that $\bigcup G_m=G$ and $\overline{(\bigcup_m \partial_b G_m)}\cap\partial G=\partial_b G.$

Then $f(\partial_b D)=\partial_b G.$
\end{proposition}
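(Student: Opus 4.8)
The plan is to repeat the scheme of the proof of Theorem~\ref{shpr}, reading ``Bergman boundary'' in place of ``Shilov boundary'' throughout, and to exploit the stronger hypothesis that $f$ extends \emph{holomorphically} past $\overline D$ rather than merely continuously. The point is that $\partial_b D$ plays, for the class $\mathcal O(\overline D)$ of functions holomorphic in a neighbourhood of $\overline D$, the same maximum-modulus role that $\partial_s D$ plays for $\mathcal O(D)\cap\mathcal C(\overline D)$; and this smaller test class is precisely the one preserved by the pull-back $g\mapsto g\circ f$ and by the symmetric push-forward operation of Theorem~\ref{shpr}, exactly because $f$ is holomorphic on a neighbourhood of $\overline D$.

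First I would dispose of the easy inclusion $\partial_b G\subset f(\partial_b D)$. If $g$ is holomorphic in a neighbourhood of $\overline G$, then $g\circ f$ is holomorphic in a neighbourhood of $\overline D$ and hence attains its maximum modulus on $\partial_b D$; since $f(\overline D)=\overline G$ by properness, the compact set $f(\partial_b D)$ is a boundary for $g$, and the minimality of $\partial_b G$ yields the inclusion.

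For the reverse inclusion I would argue by contradiction exactly as in Theorem~\ref{shpr}. Suppose $\partial_b D\not\subset f^{-1}(\partial_b G)$; then there are a function $\psi$ holomorphic in a neighbourhood of $\overline D$ and a point $x_0\in\partial D$ with $|\psi(x_0)|>\max\{|\psi(x)|:\ x\in f^{-1}(\partial_b G)\}$. The same $\limsup$ estimate as in the proof of Theorem~\ref{shpr}, now with $\partial_s$ replaced by $\partial_b$ (this replacement being licensed by the hypothesis $\overline{(\bigcup_m\partial_b G_m)}\cap\partial G=\partial_b G$, cf.\ Remark~\ref{glupiauwaga}), lets me pass to a large $m$ and to a regular point $x_0'$ of $f$ near $x_0$ with $f(x_0')\in\overline{G_m}$ and $|\psi(x_0')|>A:=\max\{|\psi(x)|:\ x\in D\cap f^{-1}(\partial_b G_m)\}$. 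The Kronecker-theorem construction then produces $d\in\mathbb N$ for which the symmetric push-forward $\varphi=\zeta\circ(\psi\times\cdots\times\psi)\circ f^{-1}$, with $\zeta(x)=x_1^d+\cdots+x_k^d$, satisfies $|\varphi(f(x_0'))|>\max\{|\varphi(x)|:\ x\in\partial_b G_m\}$.

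The step I expect to be the crux is verifying that $\varphi$ lies in the correct test class for $\partial_b G_m$, i.e.\ that it is holomorphic in a neighbourhood of $\overline{G_m}$ and not merely on $G$. Here the holomorphic extension of $f$ beyond $\overline D$ is decisive: near any boundary point the local inverse branches $h_j$ of the extended map are holomorphic, $\psi\circ h_j$ makes sense because $\psi$ too is holomorphic past $\partial D$, and the usual monodromy and symmetric-function argument glues the branches into a single-valued function holomorphic in a full neighbourhood of $\overline{G_m}$. Once this is granted, $f(x_0')\in\overline{G_m}$ together with the displayed strict inequality contradicts the defining maximum property of $\partial_b G_m$, which completes the proof.
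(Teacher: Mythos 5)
Your proof is correct, but for the main inclusion $\partial_b D\subset f^{-1}(\partial_b G)$ it takes a genuinely different route from the paper's. The paper does not rerun the contradiction machinery at all: it first notes that the hypothesis $\overline{(\bigcup_m \partial_b G_m)}\cap\partial G=\partial_b G$ forces $\partial_s G=\partial_b G$ (one always has $\partial_b G\subset\partial_s G$, while $\partial_s G\subset\overline{(\bigcup_m \partial_b G_m)}\cap\partial G$ follows from the maximum-principle observation in the first remark of the section, every $\psi\in\mathcal O(G)\cap\mathcal C(\overline G)$ being holomorphic near each $\overline{G_m}$), and then simply chains $\partial_b D\subset\partial_s D\subset f^{-1}(\partial_s G)=f^{-1}(\partial_b G)$, the middle inclusion being Theorem~\ref{shpr} in the form licensed by Remark~\ref{glupiauwaga}; only the easy inclusion $\partial_b G\subset f(\partial_b D)$ uses the holomorphic extendability of $f$, exactly as you use it. Your rerun of the Shilov argument with $\mathcal O(\overline D)$ as the test class is a legitimate, self-contained alternative, but the step you single out as the crux is in fact a non-issue: since $G_m\Subset G$ and $\varphi=\zeta\circ(\psi\times\cdots\times\psi)\circ f^{-1}$ is holomorphic on all of $G$, it is automatically holomorphic on the neighborhood $G$ of $\overline{G_m}$, so no gluing of inverse branches near $\partial D$ or $\partial G$ is needed (and attempting it there would be delicate, since the extended $f$ need not be proper or well-behaved past $\partial D$). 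In short, the paper's proof buys brevity by reducing to the already-proved Shilov statement, while yours buys independence from that reduction at the cost of repeating the Kronecker construction and of an unnecessary, though harmless, appeal to the holomorphic extension of $f$ in the second half.
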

\begin{proof}It follows from Remark~\ref{glupiauwaga} that $\partial_s G=\partial_b G.$ Thus, Theorem~\ref{shpr} together with Remark~\ref{glupiauwaga} gives: $$\partial_b D\subset \partial_s D\subset f^{-1}(\partial_s G)=f^{-1}(\partial_b G).$$ The inclusion $\partial_b G\subset f(\partial_b D)$ may be shown as in the proof of Theorem~\ref{shpr}.
\end{proof}

\begin{proof}[Sketch of proof of Theorem~\ref{lempr}] The inclusions $\partial_s L\subset f(\partial_s K)$ and $\partial_b L\subset f(\partial_b K)$ are clear.

It remains to show that $\partial_s K\subset f^{-1}(\partial_s L)$ and $\partial_b K\subset f^{-1}(\partial_b L).$ We will prove both inclusion simultaneously. Assume a contrary, i.e. there is a function $\psi\in \mathcal O(K)\cap \mathcal C(\overline G)$ (respectively $\psi\in \mathcal O(\overline K)$) such that $|\psi|$ does not attain its maximum on $f^{-1}(\partial_b L),$ i.e. $$|\psi(x_0)|>A:=\max\{ |\psi(x)|:\ x\in f^{-1}(\partial_s L)\}$$ (resp. $|\psi(x_0)|>A:=\max\{ |\psi(x)|:\ x\in f^{-1}(\partial_b L)\}$) for some $x_0\in K.$ Obviously $f(x_0)$ may be assumed to be a regular value of $f.$

Let $k$ denote the multiplicity of the mapping $f:D\to G.$ Clearly, $f|_K:K\to L$ is also of multiplicity $k.$ 

Write $f^{-1}=\{h_1, \ldots, h_k\}$ in a neighborhood of $a$, where $h_i$ are holomorphic functions. One may repeat the argument used in the proof of Theorem~\ref{shpr} to show the existence of a natural number $d$ such that \begin{equation}\label{aaa2}|\psi(h_1(f(x_0)))^d+\ldots +\psi(h_k(f(x_0)))^d|>kA^d.\end{equation} Define $\zeta(x)=x_1^d+\ldots+ x_k^d,$ for $x=(x_1,\ldots,x_k)\in\mathbb C^k.$ A function $\varphi$ given by the formula $\varphi=\zeta\circ(\psi\times\ldots\times\psi)\circ f^{-1}$ is holomorphic $L$ and continuous on $\overline{f^{-1}(L)}=f^{-1}(\overline L)$ (resp. $\varphi$ is holomorphic in an open neighborhood of $\overline L$). It follows from (\ref{aaa2}) that $$|\varphi(f(x_0))|>\max\{|\varphi(x)|:\ x\in \partial_s L\};$$ a contradiction.
\end{proof}

\begin{proof}[Proof of Corollary~\ref{cor}]\textit{a)} Define $$G_{m}:=\left\{x\in \mathbb C^n:\ \mu_G(x)<1-\frac{1}{m}\right\},\quad m=2,3\ldots.$$ It is clear that the family $\{G_m\}_m$ satisfies the assumptions of Theorem~\ref{shpr}. So applying Lemma~\ref{text} we reduce the situation to the one occurring in Theorem~\ref{shpr}.

\emph{b)} It is a direct consequence of \emph{a)} and Remark~\ref{poznauwaga}
\end{proof}

\begin{remark}Note that Theorem~\ref{lempr} and Corollary~\ref{cor} allow us to determine the Shilov boundary of some classes of domains containing the symmetrized polydisc (see \cite{Edi-Zwo}) and the tetrablock. For example $\partial_s\mathbb E=\Pi(\partial_s\mathcal R_{II})=\Pi(\mathcal U),$  where $\mathcal U$ consists of unitary symmetric matrices (see also~\cite{You}, where the author using elementary methods computed the Shilov boundary of the tetrablock).

It is also interesting that Theorem~\ref{shpr} may be used for showing the non-existence of proper holomorphic mappings between some domains. For instance, using Corollary~\ref{cor} we immediately see that $\prop(\mathbb D^n,\mathbb B_n)$ and $\prop(\mathbb B_n,\mathbb D^n)$ are empty for $n\geq 2$ (see also \cite{Nar}). As an other example of the application of this result, observe that the theorem showing that there are no proper holomorphic mappings between $\mathbb B_n\times\mathbb B_m$ and $\mathbb B_{n+m}$ follows directly from Corollary~\ref{cor}.
\end{remark}

\section{Applications to the tetrablock}
The next result has been proved in \cite{Rud} for the Euclidean ball in $\mathbb C^n.$ We would like to mention here that for our purposes a much weaker result of Tumanov and Henkin proved in \cite{Tum-Hen} is sufficient. However, it seems to be interesting that after some modifications the Rudin's idea may be applied to the symmetric domains.

First recall a well known classical result.
\begin{lemma}[see \cite{Rud1}, Theorem 8.1.2]\label{lemrud} Suppose that $\Omega_1$ and $\Omega_2$ are balanced domains in $\mathbb C^n$ and $\mathbb C^m$ respectively. Suppose moreover that $\Omega_2$ is convex and bounded and $F:\Omega_1\to\Omega_2$ is holomorphic. Then $F'(0)$ maps $\Omega_1$ into $\Omega_2.$

If additionally $F(0)=0,$ then $F(\lambda\Omega_1)\subset \lambda\Omega_2,$ $0\leq\lambda\leq1.$
\end{lemma}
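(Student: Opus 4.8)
The plan is to reduce everything to the classical one-variable Schwarz and Schwarz--Pick lemmas by restricting $F$ to complex lines through the origin and composing with supporting linear functionals of the convex target.

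First I would record the two structural facts that make the reduction work. Since $\Omega_2$ is bounded, convex and balanced, its Minkowski functional $\mu_{\Omega_2}$ is a norm on $\mathbb C^m$ with $\Omega_2=\{\mu_{\Omega_2}<1\}$, and by the complex Hahn--Banach theorem it admits the dual description
\[
\mu_{\Omega_2}(v)=\sup\{\,|\ell(v)|:\ \ell\in(\mathbb C^m)^*,\ |\ell(u)|\le\mu_{\Omega_2}(u)\ \forall u\,\},
\]
the supremum being attained; in particular $|\ell(v)|<1$ for every $v\in\Omega_2$ and every $\ell$ in this dual unit ball. On the source side, $\Omega_1$ is balanced, so $\mu_{\Omega_1}(\mu z)=|\mu|\,\mu_{\Omega_1}(z)$ and $\Omega_1=\{\mu_{\Omega_1}<1\}$. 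Hence for a fixed $z\in\Omega_1$ we may set $r:=1/\mu_{\Omega_1}(z)>1$, and the line $\mu\mapsto\mu z$ maps the disc $\{|\mu|<r\}$ into $\Omega_1$. Composing, $g(\mu):=F(\mu z)$ is holomorphic on $\{|\mu|<r\}$ with values in $\Omega_2$.

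For the first assertion, fix $z\in\Omega_1$ and an $\ell$ in the dual unit ball, and put $\phi:=\ell\circ g$. Then $\phi$ maps $\{|\mu|<r\}$ into $\mathbb D$, and the Schwarz--Pick lemma applied on the disc of radius $r$ (i.e. to $\zeta\mapsto\phi(r\zeta)$) gives $|\phi'(0)|\le(1-|\phi(0)|^2)/r\le 1/r$. Since $\ell$ is linear, $\phi'(0)=\ell(g'(0))=\ell(F'(0)z)$, so taking the supremum over all admissible $\ell$ yields $\mu_{\Omega_2}(F'(0)z)\le 1/r=\mu_{\Omega_1}(z)<1$, that is $F'(0)z\in\Omega_2$. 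Note that this step uses Schwarz--Pick rather than the plain Schwarz lemma precisely so that it survives the case $F(0)\neq0$.

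For the second assertion we have $F(0)=0$, so $\phi(0)=\ell(F(0))=0$ and the ordinary Schwarz lemma on $\{|\mu|<r\}$ gives $|\phi(\mu)|\le|\mu|/r$. Taking $z=w\in\Omega_1$ and evaluating at $\mu=\lambda$ with $0<\lambda\le1$, then passing to the supremum over $\ell$, gives $\mu_{\Omega_2}(F(\lambda w))\le\lambda/r=\lambda\,\mu_{\Omega_1}(w)<\lambda$, i.e. $F(\lambda w)\in\lambda\Omega_2$; the case $\lambda=0$ is trivial. As $w$ ranges over $\Omega_1$ this is exactly $F(\lambda\Omega_1)\subset\lambda\Omega_2$. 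The only genuine content lies in the first paragraph: the norm and supporting-functional structure of the convex balanced domain $\Omega_2$, together with the observation that balancedness of $\Omega_1$ furnishes, along each complex line, an honest radius $r>1$ strictly larger than $1$ (which is what makes both inclusions land strictly inside the open targets). Everything after that is the classical one-variable estimate.
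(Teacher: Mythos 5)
Your proof is correct. Note that the paper itself gives no proof of this lemma: it is quoted from Rudin's book (\cite{Rud1}, Theorem 8.1.2), so there is no internal argument to compare against. Rudin's own proof has a slightly different flavour: he represents $F'(0)z$ (and, in the second part, $F(\lambda z)/\lambda$) as a mean value such as $\frac{1}{2\pi\rho}\int_0^{2\pi}e^{-i\theta}F(\rho e^{i\theta}z)\,d\theta$ over a circle of radius $\rho$ with $1<\rho<1/\mu_{\Omega_1}(z)$, observes that each integrand lies in $\Omega_2$ because $\Omega_2$ is balanced, and concludes from convexity that the average lies in $\rho^{-1}\overline{\Omega_2}\subset\Omega_2$. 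Your route through the Hahn--Banach dual description of $\mu_{\Omega_2}$ and the one-variable Schwarz and Schwarz--Pick lemmas is an equally standard linearization of the same idea: the supporting functionals are exactly where convexity of $\Omega_2$ enters for you, playing the role that the closed-convex-hull argument plays in the mean-value proof, and your use of Schwarz--Pick rather than plain Schwarz correctly handles $F(0)\neq 0$ in the first assertion. One small point worth tightening: since $\Omega_1$ is not assumed bounded, $\mu_{\Omega_1}(z)$ may vanish, in which case your $r$ is infinite; either dispose of that case by Liouville's theorem (the restriction to the line is then a bounded entire map, so constant) or, more cleanly, run the estimate on every finite disc $\{|\mu|<\rho\}$ with $\rho<1/\mu_{\Omega_1}(z)$ and let $\rho$ increase.
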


\begin{lemma}\label{bih} Let $a_0,$ $b_0$ be any unitary symmetric matrices. Let $U,$ $V$ be open neighborhoods of $a_0$ and $b_0$ respectively. Let $\varphi:U\cap\mathcal R_{II}\to V\cap\mathcal R_{II}$ be a biholomorphic mapping. If $\varphi(a_k)\to b_0$ for some $a_k\to a_0,$ then $\varphi$ extends to an automorphism of $\mathcal R_{II}.$
\end{lemma}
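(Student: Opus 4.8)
The plan is to reduce the hypothesis to the one used in the theorem of Tumanov and Henkin \cite{Tum-Hen}, which asserts that a biholomorphism between one--sided neighborhoods of two pieces of the Shilov boundary $\mathcal U$ of $\mathcal R_{II}$ carrying a piece of $\mathcal U$ into $\mathcal U$ extends to an element of $\aut(\mathcal R_{II})$. Thus the entire content of the lemma is to deduce, from the single condition $\varphi(a_k)\to b_0$, that $\varphi$ extends continuously to a neighborhood of $a_0$ in $\overline{\mathcal R_{II}}$, carries $\partial\mathcal R_{II}$ into $\partial\mathcal R_{II}$, and maps the germ of $\mathcal U$ at $a_0$ into $\mathcal U$. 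I would begin by normalizing: since $\aut(\mathcal R_{II})$ acts transitively on $\mathcal U$ (see \cite{You}), after composing $\varphi$ on both sides with suitable automorphisms I may assume that $a_0$ and $b_0$ are a fixed base point of the Shilov boundary.

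The heart of the matter, and the step I expect to be the main obstacle, is the boundary extension. Here I would run a scaling argument based on the homogeneity of $\mathcal R_{II}$. Choose $\gamma_k,\delta_k\in\aut(\mathcal R_{II})$ with $\gamma_k(a_k)=0$ and $\delta_k(\varphi(a_k))=0$, and set $\Phi_k:=\delta_k\circ\varphi\circ\gamma_k^{-1}$. Each $\Phi_k$ fixes the origin and is defined on the domains $\gamma_k(U\cap\mathcal R_{II})$, which exhaust $\mathcal R_{II}$ because blowing up a boundary neighborhood of $a_0$ by the transvections $\gamma_k$ recovers the whole domain. The maps $\Phi_k$ are uniformly bounded, so by a normal families argument a subsequence converges locally uniformly to a holomorphic $\Phi:\mathcal R_{II}\to\overline{\mathcal R_{II}}$ with $\Phi(0)=0$, and the same applied to $\varphi^{-1}$ produces a candidate inverse. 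The delicate point is to rule out degeneration of the limit: since $\mathcal R_{II}$ is a bounded balanced convex domain, once $\Phi$ and its inverse are seen to map $\mathcal R_{II}$ into itself and fix $0$, Lemma~\ref{lemrud} applied to $\Phi$ and to $\Phi^{-1}$ forces $\Phi=\Phi'(0)$ to be linear, hence $\Phi\in\aut(\mathcal R_{II})$. Controlling this limit, and translating the information it carries into genuine continuous boundary extension of $\varphi$ near $a_0$ with $\varphi(a_0)=b_0$ and $\varphi(\partial\mathcal R_{II})\subset\partial\mathcal R_{II}$, is the technical crux.

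Granting the continuous boundary extension of both $\varphi$ and $\varphi^{-1}$ near $a_0$ and $b_0$, I would next show that $\varphi$ maps the germ of $\mathcal U$ at $a_0$ into $\mathcal U$, using the intrinsic description of $\mathcal U$ as the set of complex--extreme points of $\overline{\mathcal R_{II}}$. Concretely, a boundary point $p$ lies in $\mathcal U$ precisely when there is no nonconstant holomorphic disc $\Delta:\mathbb D\to\overline{\mathcal R_{II}}$ with $\Delta(0)=p$ and $\Delta(\mathbb D)\subset\partial\mathcal R_{II}$; such discs exist exactly through boundary matrices whose largest singular value is a simple $1$, i.e. off $\mathcal U$. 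This property is local and invariant under the boundary extension of a biholomorphism: if $\varphi(p)\notin\mathcal U$ for some $p\in\mathcal U$ close to $a_0$, then a nonconstant boundary disc through $\varphi(p)$ pulls back by the extension of $\varphi^{-1}$ to a nonconstant boundary disc through $p$, contradicting $p\in\mathcal U$. Hence $\varphi$ carries the germ of $\mathcal U$ at $a_0$ into $\mathcal U$.

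Finally, with $\varphi$ established as a biholomorphism of one--sided neighborhoods sending $a_0$ to $b_0$ and mapping a piece of $\mathcal U$ into $\mathcal U$, the theorem of Tumanov and Henkin \cite{Tum-Hen} applies and yields an automorphism of $\mathcal R_{II}$ agreeing with $\varphi$ on $U\cap\mathcal R_{II}$; by the identity principle this automorphism is the desired extension. I expect the genuinely hard work to be concentrated in the scaling and normal--families step, since that is where homogeneity, the convexity-based rigidity of Lemma~\ref{lemrud}, and the boundary geometry near the edge points of $\partial\mathcal R_{II}$ must be combined to produce honest continuous extension to the boundary.
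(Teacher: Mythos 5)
Your first half runs parallel to the paper's argument: normalize $a_0=b_0$ to a Shilov point, conjugate by the automorphisms $\varphi_{a_k}$, $\varphi_{b_k}$ centering $a_k$ and $\varphi(a_k)$ at the origin, note that the rescaled domains swallow $\delta_k\mathcal R_{II}$ with $\delta_k\to 1$, extract a locally uniform limit fixing $0$, and use Lemma~\ref{lemrud} plus Cartan to force the limit to be unitary. (One point you gesture at but do not supply: the non-degeneration of the limit. The paper gets this from the two-sided determinant estimate $\delta_k^3\leq|\det G_k'(0)|\leq\delta_k^{-3}$, which is exactly what Lemma~\ref{lemrud} applied to $G_k$ and $G_k^{-1}$ on the scaled copies of $\mathcal R_{II}$ provides; ``the same applied to $\varphi^{-1}$ produces a candidate inverse'' is not enough by itself, since a priori the limit of $G_k^{-1}$ need not invert the limit of $G_k$.)

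After that the two arguments part ways, and yours has a genuine gap precisely at the step you label the ``technical crux.'' Knowing that $G_k\to G$ locally uniformly with $G$ unitary tells you nothing directly about boundary values of $\varphi=\varphi_{-b_k}\circ G_k\circ\varphi_{a_k}$ near $a_0$, because the conjugating automorphisms degenerate as $k\to\infty$; no mechanism is given for converting interior locally uniform convergence into continuous extension of $\varphi$ to $\partial\mathcal R_{II}$ near $a_0$, let alone into $\varphi(\mathcal U)\subset\mathcal U$ germwise. Moreover, the hypotheses of the Tumanov--Henkin theorem (a map of one-sided neighborhoods already known to carry a piece of the Shilov boundary into the Shilov boundary) are essentially as strong as what the lemma is designed to produce from the much weaker datum $\varphi(a_k)\to b_0$; indeed the paper states that Lemma~\ref{bih} is an \emph{extension} of the result of \cite{Tum-Hen}, so routing the proof through that theorem without first proving the boundary extension is close to circular. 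The paper sidesteps boundary regularity entirely: it works with a fixed large $n$, shows via the discs $D_z=\{\lambda z:\ \|\lambda z\|<1\}$ through points of the dense open set $\mathcal N=\{z=z^t:\ \|z\|\neq\rho(z)\}$ that these discs lie in the rescaled domains, runs the Schwarz lemma on $G_n|_{D_z}$ and on $G_n^{-1}$ to get $\|G_n(z)\|=\|z\|$ on an open set, and concludes that $G_n$ restricted to a small ball $r\mathcal R_{II}$ is an origin-fixing automorphism, hence unitary; the identity principle then extends $\varphi=\varphi_{-b_n}\circ G_n\circ\varphi_{a_n}$ to an automorphism. You would need to either reproduce an argument of this kind for a finite $n$, or actually prove the continuous boundary extension you assume, before your appeal to \cite{Tum-Hen} can go through.
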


\begin{proof}
A direct computation shows that for any symmetric unitary matrix $a$ there is a unitary matrix $u$ such that $uu^t=a.$ Since any of the mappings $\mathcal R_{II}\ni x\to uxu^t\in\mathcal R_{II},$ where $u$ is unitary, is an automorphism of $\mathcal R_{II},$ we may assume that $a_0=b_0=1.$ 

Recall that (see e.g. \cite{Hua}) for every $a\in\mathcal R_{II}$ the mapping \begin{equation}\label{wzornaaut}\varphi_a(x)=-a+(1-aa^*)^{1/2}x(1-a^*x)^{-1}(1-a^*a)^{1/2}\end{equation} is an automorphism of $\mathcal R_{II}$, and $\varphi_a(0)=-a$ and its inverse is given by $\varphi_a^{-1}=\varphi_{-a}$.

Put $b_k=\varphi(a_k)$ and define $G_k:=\varphi_{b_k}\circ\varphi\circ\varphi_{-a_k}:\varphi_{a_k}(U\cap \mathcal R_{II})\to \varphi_{b_k}(V\cap\mathcal R_{II}),\ k\in\mathbb N.$ Note that $G_k$ is a biholomorphic mapping, $G_k(0)=0.$ Clearly $\varphi_{-a}(x)\to 1$ locally uniformly whenever $a\to1,$  so a compactness argument gives the existence of $\delta_k>0$ such that $\delta_k\to 1,$ as $k\to\infty,$ and both $\varphi_{a_k}(U\cap \mathcal R_{II}),$ $\varphi_{b_k}(V\cap\mathcal R_{II})$ contain a domain $\delta_k\mathcal R_{II}.$ Properly scaled Lemma~\ref{lemrud} implies that $\delta_k^3\leq |\det G_k'(0)|\leq \delta_k^{-3}.$ 

Since $G_k(0)=0,$ it follows that there exists a subsequence of $\{G_k\}$ (also denote by $\{G_k\}$) converging locally uniformly to $G:\mathcal R_{II}\to\mathcal R_{II}.$ Clearly $|\det G'(0)|=1$ and $G(0)=0,$ so by Lemma~\ref{lemrud} the domain $\mathcal R_{II}$ is mapped by $G'(0)$ into $\mathcal R_{II}.$ Since $|\det G'(0)|=1,$ the mapping $G'(0)$ preserves the volume. Hence $G'(0)$ maps $\mathcal R_{II}$ onto $\mathcal R_{II},$ in particular it is a unitary operator. Compose $G$ with $(G'(0))^{-1}$ and then apply the Cartan theorem in order to find that $G$ is also unitary.

Let $\mathcal N=\{z\in\mathcal M_{2\times 2}(\mathbb C):\ z=z^t,\ ||z||\neq\rho(z)\},$ where $\rho$ denotes the spectral radius. Note that $\mathcal N\cap\mathcal R_{II}$ is open and dense in $\mathcal R_{II}.$ Moreover $\lambda z\in\mathcal N$ for any $z\in\mathcal N$ and $\lambda\in\mathbb C\setminus\{0\}.$ 
For $z\in\mathcal R_{II}$ define $D_z=\{\lambda z:\ ||\lambda z||<1\}\subset\mathcal R_{II}.$ 

Let $K$ be any compact subset of $\mathcal N\cap \mathcal R_{II}.$ Observe that 
\begin{equation}\label{deb}\bigcup\{D_z:\ z\in K\}\subset\varphi_{a}(\mathcal R_{II}\cap U)\end{equation} for $a\in\mathcal R_{II}$ sufficiently close to $1.$ Indeed, otherwise there would exist sequences $(\lambda_n)\subset\mathbb C,$ $(z_n)\subset K$ and $(a_n)\subset\mathcal R_{II}$ such that $a_n\to 1$ and $\lambda_n\to \lambda_0\in\mathbb C,$ $z_n\to z_0\in K$ and $\lambda_n z_n \not\in \varphi_{a_n}(\mathcal R_{II}\cap U)$ (pass to subsequences, if necessary). If $\lambda_0=0,$ then the contradiction is obvious. In the other case $\lambda_0 z_0\in\mathcal N\cap\overline{\mathcal R_{II}}.$ It follows that $\det(1-\lambda_0z_0)\neq 0$ (otherwise $\rho(\lambda_0 z_0)\geq1\geq||\lambda_0 z_0||$). This in particular means that $\varphi_{a_n}^{-1}(\lambda_n z_n)$ converges to $1$ (use the formula~(\ref{wzornaaut})). Whence $\lambda_n z_n\in \varphi_{a_n}(\mathcal R_{II}\cap U)$ for large $n$; a contradiction.

Since $G$ is unitary, $G^{-1}(\mathcal N)\cap\mathcal N$ is open and dense in $\mathcal R_{II}.$ Let $B=\{z\in\mathcal M_{2\times 2}(\mathbb C):\ z=z^t,\ ||z-p||<2c\},$ where $p=p^t$ and $c>0$ are chosen such that $$B\Subset G^{-1}(\mathcal N) \cap\mathcal N\quad \text{and}\quad ||z||<1-c\quad \text{for}\quad z\in B.$$ 

Property (\ref{deb}) yields the existence of an $n$ such that \begin{equation}\label{dgz}D_z\subset\varphi_{a_n}(U\cap\mathcal R_{II})\quad \text{and}\quad D_{G(z)}\subset\varphi_{b_n}(V\cap\mathcal R_{II}),\quad z\in B.\end{equation} We may assume that for such chosen $n:$ $||G_n(z)-G(z)||<c$ whenever $||z||\leq 1-c,$ $z=z^t.$

Then for $z=z^t$ such that $||z-p||<c,$ the set $D_z$ is contained in $\varphi_{a_n}(U\cap\mathcal R_{II}).$ Since $||G^{-1}(G_n(z))-p||=||G_n(z)-G(p)||<2c$ we see that $G^{-1}(G_n(z))\in B.$ Therefore, making use of (\ref{dgz}) we get that $D_{G_n(z)}\subset \varphi_{b_n}(V\cap\mathcal R_{II}).$

Thus we may use a standard argument to the mapping $G_n:D_z\to\mathcal R_{II},$ where $z=z^t$ is such that $||z-p||<c,$ in order to find that $||G_n(z)||\leq ||z||.$ The same argument applied to $G_n^{-1}:D_{G_n(z)}\to \mathcal R_{II}$ together with the previous inequality gives $||G_n(z)||=||z||$ for $||z-p||<c$, $z=z^t.$ Obviously this equality remains validate on the whole $\varphi_{a_n}(\mathcal R_{II} \cap U).$ 

Choose $r$ such that a ball $r\mathcal R_{II}$ is contained in $\varphi_{a_n}(\mathcal R_{II}\cap U)\cap\varphi_{b_n}(\mathcal R_{II}\cap V)$ for a large $n$. The restriction of $G_k$ to $r\mathcal R_{II}$ is an automorphism of $r\mathcal R_{II}$ fixing $0.$ So we conclude from the description of the group of automorphism of classical Cartan domain of the second type that $G_k$ is unitary. From this piece of information we immediately get the assertion.
\end{proof}

We are ready to show the correspondence between proper holomorphic self-mappings of the tetrablock and the Cartan domain of the second type.

\begin{lemma}\label{lemma2}
 Let $\varphi:\mathbb E\to\mathbb E$ be a proper holomorphic mapping. Then, there is $\psi\in\aut(\mathcal R_{II})$  such that \begin{equation}\varphi\circ\Pi=\Pi\circ\psi\end{equation}
\end{lemma}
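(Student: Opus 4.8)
The plan is to realise $\Pi$ as the branched double cover that it is and to \emph{lift} $\varphi$ through it, producing $\psi$ as a lift rather than writing it down by hand. Put $D=\mathrm{diag}(1,-1)$ and let $\sigma(z)=DzD$; this is an automorphism of $\mathcal R_{II}$ with $\Pi\circ\sigma=\Pi$ and $\sigma(z)_{12}=-z_{12}$, so $\Pi\colon\mathcal R_{II}\to\mathbb E$ is a proper $2$-sheeted branched covering with deck group $\{\id,\sigma\}$ and branch locus $\{z_{12}=0\}$, mapped onto $V:=\{x\in\mathbb E:\ x_1x_2=x_3\}$, the zero set of $\pi_4(x):=x_1x_2-x_3$. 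Since the two points of a generic fibre are interchanged by $\sigma\in\aut(\mathcal R_{II})$, one checks $\Pi^{-1}(\mathbb E)=\mathcal R_{II}$ (both preimages of a point of $\mathbb E$ lie in $\mathcal R_{II}$); likewise $\Pi(\mathcal U)=\partial_s\mathbb E$ and $\sigma(\mathcal U)=\mathcal U$. Set $f:=\varphi\circ\Pi$. As a composition of proper maps it is a proper holomorphic map $\mathcal R_{II}\to\mathbb E$, so by Corollary~\ref{pext} it extends holomorphically past $\overline{\mathcal R_{II}}$, and by Corollary~\ref{cor} it satisfies $f(\partial_s\mathcal R_{II})=\partial_s\mathbb E$. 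A map $\psi$ with $\Pi\circ\psi=f$ is exactly what we want, and any such $\psi$ automatically takes values in $\mathcal R_{II}$ because $\Pi^{-1}(\mathbb E)=\mathcal R_{II}$.

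Rather than construct the lift globally, I would build it only near the boundary and then promote it. First I choose $p\in\partial_s\mathcal R_{II}=\mathcal U$ in general position, namely with $\det f'(p)\neq0$ and $f(p)\notin V$. Both are possible: $\det f'$ and $\pi_4\circ f$ are holomorphic near $\overline{\mathcal R_{II}}$ and not identically zero (as $f$ is a proper, hence generically unramified, surjection), while $\mathcal U$ is a determining set for such functions, so neither can vanish on all of $\mathcal U$. For such $p$ the map $f$ is a local biholomorphism at $p$, the value $f(p)$ lies in $\partial_s\mathbb E\setminus V$, and $\Pi$ is unbranched over $f(p)$; since $\Pi(\mathcal U)=\partial_s\mathbb E$ and $\sigma(\mathcal U)=\mathcal U$, both $\Pi$-preimages of $f(p)$ lie in $\mathcal U$. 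Picking one of them, $q\in\mathcal U$, together with the corresponding local inverse $(\Pi|_{W_q})^{-1}$, I set
$$\psi_{\mathrm{loc}}:=(\Pi|_{W_q})^{-1}\circ f$$
on a sufficiently small one-sided neighbourhood $U\cap\mathcal R_{II}$ of $p$. Then $\psi_{\mathrm{loc}}$ is a biholomorphism onto a one-sided neighbourhood of $q$ (using $\Pi^{-1}(\mathbb E)=\mathcal R_{II}$ to see that it lands in $\mathcal R_{II}$), it satisfies $\Pi\circ\psi_{\mathrm{loc}}=\varphi\circ\Pi$, and $\psi_{\mathrm{loc}}(p_k)\to q$ whenever $p_k\to p$.

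Now I apply Lemma~\ref{bih} with $a_0=p$, $b_0=q$ (both unitary symmetric) to conclude that $\psi_{\mathrm{loc}}$ extends to an automorphism $\psi\in\aut(\mathcal R_{II})$. Finally, since $\psi$ agrees with $\psi_{\mathrm{loc}}$ on the open set $U\cap\mathcal R_{II}$, where $\Pi\circ\psi_{\mathrm{loc}}=\varphi\circ\Pi$, the identity theorem forces $\Pi\circ\psi=\varphi\circ\Pi$ throughout $\mathcal R_{II}$, which is the assertion.

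The one genuine obstacle is the existence of a global lift, equivalently of a holomorphic $B$ with $B^2=(\pi_4\circ\varphi)\circ\Pi$ (this $B$ is the off-diagonal entry of $\psi$, the diagonal entries being $\varphi_1\circ\Pi$ and $\varphi_2\circ\Pi$); globally this is a monodromy/parity question for the divisor of $\pi_4\circ\varphi$ that I do not see how to settle head-on. The role of the argument above is precisely to bypass it: near a generic Shilov point a branch of $\Pi^{-1}$ is freely available, so the \emph{local} lift costs nothing, and Lemma~\ref{bih} upgrades it to a global automorphism whose mere existence then guarantees the square root a posteriori. Accordingly the real work is boundary bookkeeping — the Shilov-to-Shilov statement $f(\partial_s\mathcal R_{II})=\partial_s\mathbb E$ from Corollary~\ref{cor}, the fact that fibres over Shilov points lie in $\mathcal U$, and the general-position choice of $p$ — together with verifying that $\psi_{\mathrm{loc}}$ really is a biholomorphism of one-sided neighbourhoods fitting the hypotheses of Lemma~\ref{bih}.
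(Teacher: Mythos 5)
Your proposal is correct and follows essentially the same route as the paper: set $f=\varphi\circ\Pi$, extend it past $\overline{\mathcal R_{II}}$ via Corollary~\ref{pext}, pick a Shilov-boundary point where $f$ is unramified and $f_1f_2\neq f_3$, construct a local lift $\psi$ through the covering $\Pi$ sending that point into $\mathcal U$ (using Corollary~\ref{cor}), and then globalize with Lemma~\ref{bih} and the identity principle. Your write-up merely makes explicit some steps the paper leaves implicit (the deck transformation $\sigma$, the determining-set argument for choosing $p$, and the verification of the hypotheses of Lemma~\ref{bih}).
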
 

\begin{proof}[Proof of Lemma~\ref{lemma2}]
 First observe that $\Pi^{-1}({\mathbb E})=\mathcal R_{II},$ so it is very easy to see that $\Pi:\mathcal R_{II}\to\mathbb E$ is proper. Put $f:=\varphi\circ\Pi.$ By Corollary~\ref{pext} the mapping $f$ extends to an open neighborhood $\Omega_1$ of $\overline{\mathcal R_{II}}.$ Define \begin{equation}\mathcal J:=\{x\in\Omega_1:\ \det[f'(x)]\neq 0\ \text{and}\ f_1(x)f_2(x)\neq f_3(x)\}.\end{equation} Since every proper holomorphic mapping is non-degenerate, properties of the Shilov boundary show that the intersection of sets $\mathcal J$ and $\partial_s\mathcal R_{II}$ is non-empty. Take any $x_0\in\mathcal J\cap \partial_s\mathcal R_{II}.$

Fix a $y_0$ such that $\Pi(y_0) = f(x_0).$ The choice of $x_0$ and properties of covering
maps allow us to choose open neighborhoods $U,$ $V$ of $x_0,$ $y_0$ respectively and a
biholomorphic mapping such that \begin{equation}f=\Pi\circ \psi\quad \text{on}\ U.\end{equation} We find from Corollary~\ref{cor} that $f(x_0)$ lies in the Shilov boundary of the tetrablock. So $\psi(x_0)$ is unitary.

Lemma~\ref{bih} and the identity principle finish the proof.
\end{proof}
Now we are able to prove an Alexander-type theorem for the tetrablock.
\begin{proof}[Proof of Theorem~\ref{main}]
 It suffices to apply Lemma~\ref{lemma2} to get that the mapping $\varphi\circ\Pi$ has multiplicity $2.$ Since $\Pi$ also has multiplicity $2$ we infer that $\varphi$ is an automorphism.
\end{proof}

\begin{remark}\label{rem}Note that the tetrablock is not $\mathbb C$-convex.
Actually, let \begin{equation}\gamma(x)=|x_1-\overline{x_2}x_3|+|x_1x_2-x_3|+|x_3|^2,\quad \text{for}\quad
x=(x_1,x_2,x_3)\in\mathbb C^3.\end{equation} As shown in~\cite{You0}, $x\in\mathbb E$ if and only if
$\gamma(x)<1.$

For $\zeta\in\mathbb C$ put
$$\varphi(\zeta):=\left(\frac{1-i}{2}\zeta+\frac{1+i}{2},\frac{1+i}{2}\zeta+\frac{i-1}{2},i\zeta\right).$$
Obviously $\varphi(1),\ \varphi(-1)\in\overline{\mathbb E}.$ Moreover
$\varphi(i\zeta)=\left(\frac{1+i}{2}(\zeta+1),\frac{i-1}{2}(\zeta+1),-\zeta\right).$
An easy computation shows that for any $\zeta\in\mathbb R:$
\begin{align*}\gamma(\varphi(i\zeta))&=\left|\frac{1+i}{2}(\zeta+1)-\frac{i+1}{2}(\zeta+1)\zeta\right|+\left|-1/2(\zeta+1)^2+\zeta\right|+\zeta^2=\\
&=\frac{\sqrt{2}}{2}|1-\zeta^2|+|\frac{\zeta^2+1}{2}|+\zeta^2=\frac{\sqrt{2}}{2}|1-\zeta^2|+\frac{3}{2}\zeta^2+1/2.\end{align*} In
particular  $\gamma(\varphi(z))>1$ for any $z\in\{x\in\mathbb C:\ \rea x=0\},$ so $\mathbb E\cap\varphi(\mathbb C)$ is not connected.
\end{remark}


\begin{thebibliography}{999999999}
\bibitem[Ab-Wh-Yo]{You0} \textsc{A. Abouhajar, M. White and N. Young}, \textit{A Schwarz lemma for a domain related to mu-synthesis}, Journal of Geometric Analysis \textbf{17} (2007), no. 4, 717--750.
\bibitem[Bel1]{Bel0} \textsc{S. Bell}, \textit{Analytic hypoellipticity of the $\bar \partial $-Neumann problem and extendability of holomorphic mappings}, Acta Math. \textbf{147} (1981), no. 1-2, 109--116.
\bibitem[Bel2]{Bel} \textsc{S. Bell}, \textit{Proper holomorphic mappings between circular domains}, Comment. Math. Helv. \textbf{57} (1982), no. 4, 532--538.
\bibitem[Edi-Zwo1]{Edi-Zwo} \textsc{A. Edigarian and W. Zwonek}, \textit{Geometry of the symmetrized polydisc}, Arch. Math. \textbf{84} (2005) 364–-374.
\bibitem[Edi-Zwo2]{Zwo} \textsc{A. Edigarian and W. Zwonek}, \textit{Schwarz lemma for the tetrablock}, Bulletin of the London Mathematical Society, \textbf{41} (2009), no. 3, 506-514.
\bibitem[Har-Wri]{Har} \textsc{G. H. Hardy and E. M. Wright}, \textit{An Introduction to the Theory of Numbers}, Oxford Science Publ., 1978.
\bibitem[Hen-Nov]{Hen-Nov} \textsc{G.M. Henkin and R. Novikov}, \textit{Proper mappings of classical domains}, Lecture Notes in Math. \textbf{1043},  (1984), 625--627.
\bibitem[Hua]{Hua} \textsc{L.K. Hua}, \textit{Harmonic Analysis of Functions of Several Complex Variables in the Classical Domains}, AMS. (1963), Providence.
\bibitem[Jac]{Jac} \textsc{D. Jacquet} \textit{$\mathbb C$-convex domains with $C\sp 2$ boundary}, Complex Var. Elliptic Equ. \textbf{51} (2006), no. 4, 303--312.
\bibitem[Lem]{Lem} \textsc{L. Lempert}, \textit{La m\'etrique de Kobayashi et la repr\'esentation des domaines sur
la boule}, Bull. Soc. Math. France \textbf{109} (1981), 427--474.
\bibitem[Mok]{Mok} \textsc{N. Mok}, \textit{Nonexistence of proper holomorphic maps between certain classical bounded symmetric domains}, Chinese Annals of Mathematics - \textit{Series B} \textbf{29}, No.2 (2008), 135--146.
\bibitem[Nar]{Nar} \textsc{R. Narasimhan}, \textit{Several Complex Variables,} Chicago Lectures in Mathematics, 1971.
\bibitem[Nik]{Nik} \textsc{N. Nikolov}, \textit{The symmetrized polydisc cannot be exhausted by domains biholomorphic to convex domains}, Ann. Polon. Math., \textbf{88} (2006), 279--283.
\bibitem[Rud1]{Rud1} \textsc{W. Rudin}, \textit{Fuction theory in the unit ball of $\mathbb C^n$}, Grundlehren der mathematischen Wissenschaften 241
(1980).
\bibitem[Rud2]{Rud} \textsc{W. Rudin}, \textit{Holomorphic maps that extend to automorphisms of a ball}, Proc. Amer. Math. Soc. \textbf{81} (1981), no. 3, 429--432.
\bibitem[Tu1]{Tu1} \textsc{Z.-H. Tu}, \textit{Rigidity of proper holomorphic mappings between equidimensional
bounded symmetric domains}, Proc. Amer. Math. Soc. \textbf{130} (2002), 1035--1042.
\bibitem[Tu2]{Tu2} \textsc{Z.-H. Tu}, \textit{Rigidity of proper holomorphic mappings between nonequidimensional bounded symmetric domains}, Math. Z. \textbf{240} (2002), 13--35.
\bibitem[Tu3]{Tu3} \textsc{Z.-H. Tu}, \textit{Rigidity of proper holomorphic mappings between bounded symmetric domains}, Geometric function theory in several complex variables, pp.310-316, World Sci. Publishing, River Edge, NJ, 2004.
\bibitem[Tum-Hen]{Tum-Hen} \textsc{A.E. Tumanov and G.M. Khenkin}, \textit{Local characterization of holomorphic automorphisms of Siegel domains}, (Russian) Funktsional. Anal. i Prilozhen. \textbf{17}0 (1983), no. 4, 49--61. 
\bibitem[You]{You} \textsc{N. Young}, \textit{The automorphism group of the tetrablock}, Journal of the London Mathematical Society , \textbf{77} (2008), no. 3,  757-770.
\end{thebibliography}
\end{document}